\numberwithin{equation}{section}
\theoremstyle{plain}
\newtheorem{thm}{Theorem}[section]
\newtheorem{lem}[thm]{Lemma}
\theoremstyle{definition}
\newcommand{\N}{\mathbb{N}}
\begin{document}
	\title{On variational principles of  metric mean dimension  on subset in Feldman-Katok metric}

	\author{Kunmei Gao, Ruifeng Zhang}

	\address[K.~Gao] {School of Mathematics, Hefei University of Technology, Hefei, Anhui, 230009, P.R. China}
	\email{2102379003@qq.com}
	
	\address[ Rui.~Zhang] {School of Mathematics, Hefei University of Technology, Hefei, Anhui, 230009, P.R. China}
   \email{rfzhang@hfut.edu.cn}
	
	\subjclass[2010]{37A35, 37B99 }
	\keywords{ metric mean dimension,  variational principle, Feldman Katok metric}
	
	\begin{abstract}
	 In this paper, we studied the  metric mean dimension in Feldman-Katok(FK for short) metric.	 We introduced the notions of FK-Bowen metric mean dimension and  FK-Packing metric mean dimension on subset. And we established two variational principles.

	\end{abstract}

	\date{\today}
	
	\maketitle
	
	\section{Introduction}

	Let $(X,T)$ be a topological dynamical system (TDS for short), if $X$ is a compact metric space with a metric $d$, $T:\:X\to X$ is a continuous map. We call $M(X)$ the set of all Borel probability measures on $X$.

	A fundamental problem in ergodic theory is to classify the measure-preserving system (MPS for short) up to isomorphism. In 1958, Kolmogorov\cite{Kolmogrov} introduced the concept of entropy in ergodic theory, proving that entropy is an isomorphic invariant for MPSs. A notable achievement in the isomorphic problem is Ornstein's\cite{Ornstein} theory. He proved that any two equal entropy Bernoulli processes are isomorphic. The concept of a finitely determined process plays an important role in Ornstein's theory, and its definition is based on Hamming distance $\overline{d}_n$:\[\overline{d}_n(x_0x_1...x_{n-1},y_0y_1...y_{n-1})=\frac{\mid\{0\leq i\leq n-1:x_i\neq y_i\}\mid}{n}.\]
	By changing the Hamming distance $\overline{d} _n$ in Ornsten's theory to the edit distance $\overline{f} _n$:\[\overline{f}_n(x_0x_1...x_{n-1},y_0y_1...y_{n-1})=1-\frac{k}{n},\]where k is the largest inter such that there exist\[0\leq i_1\leq ...\leq i_k\leq n-1,0\leq j_1\leq ...\leq j_k\leq n-1\]and $x_{i_s}=y_{j_s}$ for $s=1,...,k$, Feldman\cite{Feldman} defined the  loosely Bernoulli system, 
	 and brought a new idea into the classification of MPSs. Based on this, a new theory which is parallel to Ornstein’s theory was then established \cite{Katok77,ORW,Feldman}.
	
The Feldman-Katok metric was introduced in \cite{KL}, which is a topological counterpart of the edit distance. In \cite{GK}, the authors used the Feldman-Katok metric to characterize zero entropy loosely Bernoulli MPSs, they also studied FK continuous and FK sensitive and obtained the Auslander-Yorke dichotomies. In 2021, Cai and Li\cite{CL}  study the entropy in the Feldman-Katok metric and  obatined an entropy formulae. Recently, Nie and Huang \cite{NH} further study the restricted sensitivity and entropy in FK metirc and obtained conditional entropy formulae.

Mean dimension was first introduced by Gromov\cite{Gromov} in 1999.  Then Lindenstrauss and Weiss\cite{LW} defined a metric version of mean dimension, called metric mean dimension. Mean dimension can be applied to solve embedding problems in dynamical systems (see for example, \cite{Gutman,L99, LT14}) and 
is also a meaningful quantity to describe system with infinite entropy. In dynamical systems, people are interested in the relation of the topological concepts and the measure-theoretic concepts. Between topological entropy and measure-theoretical entropy, there exists a variational principles. It is natural to ask wether there exists some verison of varianton relation for mean dimension. And this is not simple, the first variational principle for metric mean dimensions was  established by Lindenstrauss and Tuskmoto \cite{LT18} until 2017. After that, there are sequences of  researches  on variation principles of mean dimension, see \cite{CCL,CDZ,CLS,LT19,S,W,YCZ22packing} for example.

 In this paper,  we  study the metric mean dimension in Feldman-Katok metric. More precisely, we first introduced the notions of FK-Bowen metric mean dimension and FK-Packing metric mean dimension on subsets. Then we  established the  variational principle for the FK-Bowen  metric mean dimension (FK-Packing  metric mean dimension, respectively)  on subsets. The main results are the following twe theorems.
	
	\begin{thm}\label{thm-main}
	 Let $(X,T)$ be a $TDS$ and $K$ be a non-empty compact subset of $X$, then
	 $$\overline{mdim}_{FK}^B(T,K,d)=\limsup_{\epsilon\rightarrow 0}\frac{\sup\{\underline{h}_{\mu}^{FK}(T,\epsilon):\mu(K)=1,\mu\in M(X)\}. 	}{\log\frac{1}{\epsilon}}.$$	 	

	\end{thm}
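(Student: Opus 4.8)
The plan is to adapt the classical Brin–Katok / Bowen-type argument that relates topological complexity on an arbitrary subset to measure-theoretic quantities, carried out at the level of the metric mean dimension, and then make it work in the Feldman–Katok metric $d_{FK}$ rather than in the usual Bowen metric $d_n$. I would first fix $\epsilon>0$ and define the relevant $\epsilon$-scale quantities: on the topological side, the FK-Bowen packing/covering numbers that enter $\overline{mdim}_{FK}^B(T,K,d)$, and on the measure side, the lower local entropy-type rate $\underline h_\mu^{FK}(T,\epsilon)$ (the quantity whose limsup over $\epsilon$, normalized by $\log\frac1\epsilon$, we want to match). The statement is an equality of two $\limsup_{\epsilon\to0}$'s after dividing by $\log\frac1\epsilon$, so it suffices to prove, for each fixed $\epsilon$, a ``lower bound'' inequality and an ``upper bound'' inequality between the $\epsilon$-level topological quantity on $K$ and the supremum of $\underline h_\mu^{FK}(T,\epsilon')$ over measures carried by $K$, possibly with $\epsilon$ slightly perturbed to $\epsilon'$ (a standard loss one absorbs in the limit). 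This reduces the theorem to a Billingsley-type lemma in the FK metric.

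For the lower bound $\overline{mdim}_{FK}^B(T,K,d)\le \limsup\frac{\sup_\mu \underline h_\mu^{FK}(T,\epsilon)}{\log\frac1\epsilon}$, I would run a mass-distribution / Frostman argument: given a subset $Z\subseteq K$ with large FK-Bowen dimension at scale $\epsilon$, one produces (via a weak-* limit of normalized sums of Dirac masses along near-optimal $(n,\epsilon)$-separated sets, exactly as in the Misiurewicz-style proofs for Bowen entropy on subsets) a measure $\mu$ with $\mu(K)=1$ whose FK local lower entropy at scale $\epsilon$ is at least the dimension of $Z$; here one needs the FK analogue of the fact that the dynamical FK-balls $B_{FK}(x,n,\epsilon)$ behave like Bowen balls, which I would take from the FK-metric machinery of \cite{KL,GK,CL}. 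For the reverse inequality, I would use a covering argument: from a measure $\mu$ with $\mu(K)=1$ and local lower FK-entropy $\ge s$ at scale $\epsilon$, the set of points with controlled local behavior has full measure, and a Vitali-type covering lemma in the FK metric lets one bound the FK-Bowen covering number of $K$ from below, giving $\overline{mdim}_{FK}^B(T,K,d)\ge s$ in the limit. Both directions are the ``entropy on subsets'' arguments of Bowen/Pesin–Pitskel transplanted to $d_{FK}$ and then read at the metric-mean-dimension scale by dividing by $\log\frac1\epsilon$ and letting $\epsilon\to0$.

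The main obstacle I anticipate is the Vitali/Besicovitch-type covering step in the Feldman–Katok metric: unlike the Bowen metric $d_n$, $d_{FK,n}$ is not a genuine metric built by a simple $\max$ over iterates — it involves an infimum over order-preserving matchings — so dynamical FK-balls need not have the clean nesting and ``bounded overlap'' properties that make the classical $5r$-covering lemma apply directly. I would handle this either by invoking the comparison estimates between FK-balls and ordinary Bowen balls already present in \cite{CL} (which is why the theorem can even be stated at scale $\epsilon$ with the usual $\log\frac1\epsilon$ normalization), or by proving a weak covering lemma tailored to $d_{FK,n}$ sufficient for the mass-distribution argument. A secondary technical point is the interchange of the two $\limsup$'s / $\sup$'s in $\epsilon$: one must be careful that the $\epsilon'\to\epsilon$ slack introduced in each inequality disappears after normalization by $\log\frac1\epsilon$, which is routine but should be stated explicitly. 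Modulo these, the proof is a careful FK-metric rerun of the now-standard variational principle for Bowen metric mean dimension on subsets (as in \cite{YCZ22packing} and the references therein), so I would organize it as: (i) a lemma giving the $\epsilon$-level two-sided inequality via mass distribution, (ii) the FK covering lemma, (iii) assembling the $\limsup$ and concluding.
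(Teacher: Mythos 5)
Your overall skeleton (fix $\epsilon$, prove two $\epsilon$-level inequalities, divide by $\log\frac{1}{\epsilon}$ and let $\epsilon\to0$) matches the paper, and your ``upper bound'' direction is essentially right, even simpler than you make it: since $d_{FK_n}$ satisfies the triangle inequality, one does not need any Vitali argument there — given $\mu$ with $\mu(K)=1$, one restricts to a Borel set $A_l$ on which $\mu(B_{FK_n}(x,\epsilon))\le e^{-\alpha n}$ for $n\ge N$, and any cover of $K\cap A_l$ by balls $B_{FK_{n_i}}(x_i,\epsilon/2)$ meeting $A_l$ satisfies $B_{FK_{n_i}}(x_i,\epsilon/2)\subset B_{FK_{n_i}}(y_i,\epsilon)$ with $y_i\in A_l$, so the covering sum is bounded below by $\mu(A_l)$. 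Likewise your anticipated obstacle about the $5r$-covering lemma is not real: $d_{FK_n}$ is a genuine (pseudo)metric, so Lemma \ref{lem-5r} applies verbatim with $d_{FK_n}$ in place of $d$; no comparison with Bowen balls is needed, and the one-sided inclusions from \cite{CL} would not suffice for a two-sided dimension comparison anyway.

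The genuine gap is in the hard direction, where you propose to produce the measure by a Misiurewicz-style weak-$*$ limit of normalized sums of Dirac masses along near-optimal $(n,\epsilon)$-separated sets. This does not work here, for two reasons. First, $M_{FK}(T,d,K,s,\epsilon)$ is a Hausdorff-measure-type quantity defined through covers by balls $B_{FK_{n_i}}(x_i,\epsilon)$ of \emph{varying} orders $n_i$; separated sets at a single time $n$ only control a box/capacity-type quantity, which can strictly dominate $M_{FK}$, so largeness of $M_{FK}$ is not captured by such sets. Second, the measures in the statement are arbitrary Borel probability measures with $\mu(K)=1$ (not invariant), and what the Billingsley-type bound requires is the uniform Frostman estimate $\mu(B_{FK_n}(x,\epsilon))\le \frac{1}{c}e^{-sn}$ for \emph{all} $x\in X$ and all $n\ge N$; a weak-$*$ limit of atomic measures gives no such uniform control, and without invariance there is no Kolmogorov--Sinai entropy, subadditivity, or power trick to fall back on. The paper closes exactly this gap with the Feng--Huang machinery: it introduces the weighted quantity $W_{FK}$, proves $\overline{mdim}_{FK}^B=\overline{Wmdim}_{FK}^B$ via the $5r$-covering lemma in the metric $d_{FK_n}$ (Lemma \ref{lem-weight}), and then proves a dynamical Frostman lemma (Lemma \ref{lem-frostman}) by a Hahn--Banach extension plus the Riesz representation theorem, which yields a measure $\mu$ with $\mu(K)=1$ and the required uniform decay of FK-ball measures, hence $\underline{h}_{\mu}^{FK}(T,\epsilon_k)\ge\lambda\log\frac{1}{\epsilon_k}$. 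Your proposal would need to be rebuilt around such a Frostman-type construction (or an equivalent) for the argument to go through.
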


\medskip	
	
		\begin{thm}\label{thm-packing}
		Let $(X,T)$ be a $TDS$ and $K$ be a non-empty compact subset of $X$, then $$\overline{mdim}_{FK}^P(T,K,d)=\limsup_{\epsilon\to0}\frac{\sup\{\overline{h}_{\mu}^{FK}(T,\epsilon):\mu(K)=1,\mu\in M(X)\}}{\log\frac{1}{\epsilon}}.$$
	\end{thm}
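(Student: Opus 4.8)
The plan is to mirror the proof of Theorem~\ref{thm-main}, systematically replacing the ``lower'' (liminf / Bowen / Hausdorff-type) objects there by their ``upper'' (limsup / packing-type) analogues, with the packing-entropy variational principle of Feng and Huang as a template. Write $P_{FK}(K,\epsilon)$ for the scale-$\epsilon$ FK-packing entropy, so that $\overline{mdim}_{FK}^{P}(T,K,d)=\limsup_{\epsilon\to0}P_{FK}(K,\epsilon)/\log\frac1\epsilon$, let $B_n^{FK}(x,\epsilon)$ denote the $(\epsilon,n)$-FK-Bowen ball, and set $\overline{h}_{\mu}^{FK}(T,\epsilon,x)=\limsup_{n\to\infty}-\frac1n\log\mu\big(B_n^{FK}(x,\epsilon)\big)$, so that $\overline{h}_{\mu}^{FK}(T,\epsilon)=\int\overline{h}_{\mu}^{FK}(T,\epsilon,x)\,d\mu(x)$. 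I will establish, for a fixed constant $c>1$ (a Feldman--Katok quasi-triangle constant) and every $\epsilon>0$, the two scale-wise estimates
\[\sup_{\mu\in M(X),\,\mu(K)=1}\overline{h}_{\mu}^{FK}(T,c\epsilon)\ \le\ P_{FK}(K,\epsilon)\ \le\ \sup_{\mu\in M(X),\,\mu(K)=1}\overline{h}_{\mu}^{FK}(T,\epsilon/c);\]
dividing by $\log\frac1\epsilon$, letting $\epsilon\to0$ and substituting $\epsilon\mapsto\epsilon/c$, resp. $\epsilon\mapsto c\epsilon$ — which changes the denominator only by a factor $\log\frac1{c\epsilon}/\log\frac1\epsilon\to1$ — will then give the theorem (when the metric mean dimension is infinite both sides are $+\infty$ and there is nothing to prove).

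For the left-hand inequality I would fix $\mu$ with $\mu(K)=1$ and $t<\overline{h}_{\mu}^{FK}(T,c\epsilon)$, so that $Z:=\{x:\overline{h}_{\mu}^{FK}(T,c\epsilon,x)>t\}$ has $\mu(Z)>0$. For a Borel set $W\subseteq Z$, the FK-Bowen balls $B_n^{FK}(x,\epsilon)$ with $x\in W$, $n\ge N$ and $\mu\big(B_n^{FK}(x,c\epsilon)\big)<e^{-nt}$ form a fine (Vitali-type) cover of $W$, since each $x\in W$ admits arbitrarily large such $n$. Using the FK-analogue of the dynamical Vitali covering lemma that underlies Theorem~\ref{thm-main} (and rests on the properties of $d_{FK}$ from \cite{KL,CL}), I would extract a pairwise-disjoint subfamily $\{B_{n_i}^{FK}(x_i,\epsilon)\}_i$, $n_i\ge N$, whose $c$-dilations $B_{n_i}^{FK}(x_i,c\epsilon)$ still cover $W$; then $\mu(W)\le\sum_i\mu\big(B_{n_i}^{FK}(x_i,c\epsilon)\big)\le\sum_i e^{-n_it}$, and as $\{B_{n_i}^{FK}(x_i,\epsilon)\}_i$ is an $(\epsilon,N)$-packing of $W$ this yields $P_0^{t}(W,\epsilon)\ge\mu(W)$ for the packing pre-measure. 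Summing over an arbitrary countable cover of $Z$ by such pieces and using subadditivity of $\mu$ shows that the packing measure satisfies $P^{t}(Z,\epsilon)\ge\mu(Z)>0$, whence $P_{FK}(K,\epsilon)\ge P_{FK}(Z,\epsilon)\ge t$; letting $t\uparrow\overline{h}_{\mu}^{FK}(T,c\epsilon)$ and taking the supremum over $\mu$ finishes this half.

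The right-hand inequality is the core of the proof. Assuming $P_{FK}(K,\epsilon)>s$, i.e. $P^{s}(K,\epsilon)>0$, and using that $P^{s}(\cdot,\epsilon)$ is a Borel measure, a Baire-category argument (as in Theorem~\ref{thm-main}) produces a nonempty compact $K_0\subseteq K$ with $P^{s}(K_0\cap V,\epsilon)>0$ for every open $V$ meeting $K_0$; comparing the packing pre-measure with $(\epsilon,n)$-separated-set counts upgrades this to: the upper capacity FK-entropy of $K_0\cap V$ at scale $\epsilon$ is $\ge s$ for every such $V$. On this ``homogeneously thick'' compact set I would then build, by a recursive (Moran-type) construction, a measure $\mu\in M(X)$ with $\mu(K_0)=1$ and a sequence $m_k\to\infty$ such that each stage-$k$ piece carries mass $\le e^{-m_k(s-\gamma)}$, while the $\epsilon$-separation of the stage-$k$ centres together with the FK quasi-triangle inequality confines any ball $B_{m_k}^{FK}(x,\epsilon/c)$ to boundedly many stage-$k$ pieces; hence $\mu\big(B_{m_k}^{FK}(x,\epsilon/c)\big)\le Ce^{-m_k(s-\gamma)}$ for $\mu$-a.e.\ $x$, so $\overline{h}_{\mu}^{FK}(T,\epsilon/c,x)\ge s-\gamma$ $\mu$-a.e., and therefore $\overline{h}_{\mu}^{FK}(T,\epsilon/c)\ge s-\gamma$. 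Letting $\gamma\to0$ and $s\uparrow P_{FK}(K,\epsilon)$ completes the estimate.

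The main obstacle I anticipate is exactly this last measure construction in the Feldman--Katok setting. Because an FK-Bowen ``ball'' is governed by an approximate matching rather than by a genuine metric, the geometric inputs that make the classical argument run — a quasi-triangle inequality (with the attendant doubling of the matching tolerance, which forces the $\epsilon\mapsto\epsilon/c$ loss of scale), bounded-multiplicity and covering estimates for FK-Bowen balls, and stability of the upper capacity FK-entropy under passage to the closure — each have to be re-derived for $d_{FK}$, leaning on \cite{KL,CL} and on what is already established for Theorem~\ref{thm-main}. Keeping track of these scale losses is the reason the natural target is the two-sided scale-wise estimate above rather than a clean per-scale identity; everything else is bookkeeping parallel to the known non-FK packing variational principle (\cite{YCZ22packing}) and to Theorem~\ref{thm-main}.
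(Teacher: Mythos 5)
Your overall architecture coincides with the paper's: one inequality by a covering ($5r$/Vitali) argument applied to the set where the local upper FK-entropy exceeds a threshold, the other by a Frostman/Moran-type construction of a measure supported on a compact subset with controlled measure of FK-balls (the paper's Lemma \ref{lem-packing-lower}, following Feng--Huang). Two places in your sketch, however, are not justified as written.

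First, in your left-hand inequality you extract a pairwise disjoint subfamily, whose dilations still cover, from the family of all balls $B_{FK_n}(x,\epsilon)$ with $x\in W$ and \emph{varying} $n\ge N$, invoking an ``FK-analogue of the dynamical Vitali covering lemma that underlies Theorem \ref{thm-main}''. No such lemma underlies Theorem \ref{thm-main}: the only covering tool there is the static $5r$-lemma (Lemma \ref{lem-5r}), which applies to balls of a single metric $d_{FK_n}$, i.e.\ to one fixed order $n$; balls of different orders live in different metrics and there is no Vitali-type extraction across them. The paper avoids this by a pigeonhole reduction: with $E_n=\{x\in E:\mu(B_{FK_n}(x,\epsilon))<e^{-(s+\theta)n}\}$ one finds a single $n\ge N$ with $\mu(E_n)\ge\mu(E)/(n(n+1))$, applies Lemma \ref{lem-5r} in $d_{FK_n}$ only, and the surplus factor $e^{n\theta}$ forces the packing sums to blow up as $N\to\infty$; non-Borel covering pieces are then handled via closures and Lemma \ref{lem-2}, at the cost of a further scale drop $\epsilon/5\to\epsilon/10$. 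Without this fixed-$n$ reduction your extraction step does not go through; with it, your argument becomes the paper's.

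Second, you correctly identify the Moran-type construction as the core of the other inequality, but your sketch only asserts it, and that is exactly where the work of Lemma \ref{lem-packing-lower} lies: recursively constructed finite sets $K_i$, \emph{point-dependent} orders $m_i(\cdot)$ (not a single uniform $m_k$ per stage), weights $e^{-m_i(x)s}$, separation radii $\gamma_i$ guaranteeing \eqref{C-3-1}, the repeated use of Lemma \ref{flj} to get disjoint closed FK-balls with prescribed weight sums, and the weak* limit argument giving $\overline{h}_{\mu}^{FK}(T,\epsilon)\ge s$. Two of your precautions are also unnecessary: $d_{FK_n}$ satisfies the genuine triangle inequality (it is used as such in Lemma \ref{lem-2}), so no quasi-triangle constant $c$ is needed; and the Frostman direction in the paper loses no scale at all, the only loss ($\epsilon\to\epsilon/10$) occurring in the covering direction --- though, as you note, such constant scale losses are harmless after dividing by $\log\frac{1}{\epsilon}$.
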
	
		
		The rest of this paper is organized as follows. In Section 2, we introduce the notions of FK-Bowen metric mean dimension  and FK-Packing metric mean dimension on subsets and also the measure-theoretical local entropy in FK metric.  In Section 3, we prove the variational principle for FK-Bowen mean dimension, i.e. Theorem \ref{thm-main}. In Section 4, we prove the variational principle for FK-Bowen mean dimension, i.e. Theorem \ref{thm-packing}.

\subsection*{Acknowledgments} 	
 The authors are supported by NNSF of China (Grant No.~11871188 and ~12031019).
		
		\section{ Prelminary}
		
		In this section, we will introduce some notions and properties that will be used.
		
		\subsection{Feldman-Katok  metric }

		Let $(X,T)$ be a $TDS$ with $d$ be the metric on $X$. For $x,y\in X,\:\delta>0$ and $n\in\mathbb{N}$, we define an $(n,\delta)-match\ of\ x\ and\ y$ is an $\textbf{order preserving}(i.e\ \pi(i)<\pi(j),\:i<j)$ bijection$\pi:D(\pi)\to R(\pi)$ such that $D(\pi),R(\pi)\subset\{0,1,...,n-1\}$ and for every $i\in D(\pi)$ we have $d(T^ix,T^{\pi(i)}y)<\delta$. Let  $\mid\pi\mid$ be the cardinality of $D(\pi)$. 	
		 The FK metric defined on $X$ is\[d_{FK_n}(x,y):=\inf{\{\delta>0,\:\bar{f}_{n,\delta}(x,y)<\delta\}},\]
		
		where \[\bar{f}_{n,\delta}(x,y)=1-\frac{\max\{\mid\pi\mid:\pi\ is(n,\delta)-match\ of\ x\ and\ y\}}{n}.\]

		For each $\epsilon>0$, under the FK metric we can define the FK open ball and closed ball with a radius of $\epsilon$ as\[B_{FK_n}(x,\epsilon)=\{y\in X:d_{FK_n}(x,y)<\epsilon\},\]\[\overline{B}_{FK_n}(x,\epsilon)=\{y\in X:d_{FK_n}(x,y)\leq\epsilon\}.\]
		
	    For $x,y \in X$ and $n \in \N$, let
	    $$d_n(x,y)=\max_{0 \le i \le n-1}d(T^ix,T^iy) \text{ \ and \  }
	    \overline{d}_n(x,y)=\frac{1}{n}\sum_{i=0}^{n-1}d(T^ix,T^iy)$$
	    We have the following result.
	    \begin{lem}\cite[Lemma 2.3]{CL}
	    $d_{FK_n}(x,y) \le (\overline{d}_n(x,y))^{\frac{1}{2}}$ and $ \overline{d}_n(x,y)\le d_n(x,y)$.
	   \end{lem}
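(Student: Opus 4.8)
The second inequality $\overline{d}_n(x,y) \le d_n(x,y)$ is immediate: for each $i$ we have $d(T^ix,T^iy) \le \max_{0 \le j \le n-1} d(T^jx,T^jy) = d_n(x,y)$, so averaging over $i = 0, \dots, n-1$ gives $\overline{d}_n(x,y) = \frac{1}{n}\sum_{i=0}^{n-1} d(T^ix,T^iy) \le d_n(x,y)$. The plan is to dispatch this in one line.

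For the first inequality $d_{FK_n}(x,y) \le (\overline{d}_n(x,y))^{1/2}$, the idea is to exhibit a good $(n,\delta)$-match built from the identity. Set $a = \overline{d}_n(x,y)$ and let $\delta = a^{1/2}$ (if $a = 0$ the claim is trivial, so assume $a > 0$). First I would consider the ``identity'' partial matching: let $D = \{0 \le i \le n-1 : d(T^ix,T^iy) < \delta\}$ and let $\pi : D \to D$ be the identity map, which is trivially order preserving and is an $(n,\delta)$-match of $x$ and $y$. The key step is to lower-bound $|\pi| = |D|$ via a Markov/Chebyshev-type estimate: the complement $D^c = \{i : d(T^ix,T^iy) \ge \delta\}$ satisfies
$$
|D^c| \cdot \delta \le \sum_{i \in D^c} d(T^ix,T^iy) \le \sum_{i=0}^{n-1} d(T^ix,T^iy) = n a,
$$
so $|D^c| \le na/\delta = n a / a^{1/2} = n a^{1/2} = n\delta$, hence $|D| = n - |D^c| \ge n(1-\delta)$. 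Therefore $\bar f_{n,\delta}(x,y) = 1 - \frac{\max\{|\pi|\}}{n} \le 1 - \frac{|D|}{n} \le \delta$. Since this holds for $\delta = a^{1/2}$, by definition of $d_{FK_n}$ as an infimum over such $\delta$ we get $d_{FK_n}(x,y) \le a^{1/2} = (\overline{d}_n(x,y))^{1/2}$.

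The only mild subtlety — and the one point worth checking carefully — is the strict-versus-nonstrict inequality in the definition of $d_{FK_n}$, which requires $\bar f_{n,\delta}(x,y) < \delta$ (strict) for $\delta$ in the admissible set, whereas the Markov bound yields $\bar f_{n,\delta}(x,y) \le \delta$. I would handle this by the standard infimum argument: for any $\delta' > a^{1/2}$ we have $\bar f_{n,\delta'}(x,y) \le \bar f_{n,a^{1/2}}(x,y) \le a^{1/2} < \delta'$ (using that $\bar f_{n,\delta}$ is non-increasing in $\delta$, since enlarging $\delta$ only adds admissible matches), so every $\delta' > a^{1/2}$ lies in the set $\{\delta > 0 : \bar f_{n,\delta}(x,y) < \delta\}$, and taking the infimum gives $d_{FK_n}(x,y) \le a^{1/2}$. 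I do not expect any real obstacle here; the entire argument is an elementary Markov inequality plus careful bookkeeping of the definition.
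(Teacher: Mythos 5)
Your proof is correct: the Markov-type estimate $|\{i: d(T^ix,T^iy)\ge\delta\}|\le n\overline{d}_n(x,y)/\delta$ applied to the identity matching, together with the monotonicity of $\bar f_{n,\delta}$ in $\delta$ to pass from $\bar f_{n,\delta}\le\delta$ to membership in the set $\{\delta>0:\bar f_{n,\delta}(x,y)<\delta\}$, gives exactly $d_{FK_n}(x,y)\le(\overline{d}_n(x,y))^{1/2}$, and the second inequality is indeed immediate. The paper itself offers no proof (it simply cites Lemma 2.3 of \cite{CL}), and your argument is essentially the standard one found in that reference, with the strict-versus-nonstrict issue in the definition of $d_{FK_n}$ handled correctly.
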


		\subsection{FK  metric mean dimension on subsets}
	    In this subsection, we will introduce two kinds of  metric mean dimension on subsets.
		Let $Z\subset X$, $\epsilon>0$, $N\in\mathbb{N}$, $s\in\mathbb{R}$. 
		We define\[M_{FK}(T,d,Z,s,N,\epsilon)=\inf\{\sum\limits_{i\in I}e^{-n_{i}s}\},\]where infimum takes all the finite or countable covers $\{B_{FK_{n_i}}(x_i,\epsilon)\}_{i\in I}$ of $Z$ and $n_i\geq N$, $x_i\in X$.
		
		We can find the quantity $M_{FK}(T,d,Z,s,N,\epsilon)$ dose not decrease as $N$ increases, so define\[M_{FK}(T,d,Z,s,\epsilon)=\lim_{N\to\infty}M_{FK}(T,d,Z,s,N,\epsilon)\]
		
		We can find that the quantity $M_{FK}(T,d,Z,s,\epsilon)$ has a critical value of parameter $s$ jumping from $\infty$ to $0$. We define such critical value as
		\begin{equation*}
			\begin{split}	
				M_{FK}(T,d,Z,\epsilon):&=\inf\{s:M_{FK}(T,d,Z,s,\epsilon)=0\}\\
				&=\sup\{s:M_{FK}(T,d,Z,s,\epsilon)=+\infty\}.
			\end{split}
		\end{equation*}
		
		Put\[\overline{mdim}_{FK}^B(T,Z,d)=\limsup\limits_{\epsilon\to0}\frac{M_{FK}(T,d,Z,\epsilon)}{\log\frac{1}{\epsilon}}\]
		
		We call the number $\overline{mdim}_{FK}^B(T,Z,d)$ as FK-Bowen metric mean dimension of $T$ on the set $Z$. 
		
		Next we introduce the FK-Packing metric mean dimension on subset.		
       Let $Z\subset X$, $\epsilon>0$, $N\in\mathbb{N}$, $s\in\mathbb{R}$. Define\[P_{FK}(T,d,Z,s,N,\epsilon)=\sup\{\sum\limits_{i\in I}e^{-n_{i}s}\},\]where supremum takes all the finite or countable pariwise disjoint closed families  $\{\overline{B}_{FK_{n_i}}(x_i,\epsilon)\}_{i\in I}$ of $Z$ and $n_i\geq N$, $x_i\in X$.
		
		We can find the quantity $P_{FK}(T,d,Z,s,N,\epsilon)$ does not increase as $N$ increases, so define\[P_{FK}(T,d,Z,s,\epsilon)=\lim\limits_{N\to\infty}P_{FK}(T,d,Z,s,N,\epsilon)\]Let\[\mathcal{P}_{FK}(T,d,Z,s,\epsilon)=\inf\{\sum\limits_{i=1}^{\infty}P(T,d,Z_i,s,\epsilon):\cup_{i\geq1}Z_i\supseteq Z\}\]
		
		We can find that the quantity $\mathcal{P}_{FK}(T,d,Z,s,\epsilon)$ has a critical value of parameter $s$ jumping from $\infty$ to $0$. We define such critical value as
		\begin{equation*}
		\begin{split}	
		\mathcal{P}_{FK}(T,d,Z,\epsilon):&=\inf\{s:\mathcal{P}_{FK}(T,d,Z,s,\epsilon)=0\}\\
		&=\sup\{s:\mathcal{P}_{FK}(T,d,Z,s,\epsilon)=+\infty\}.
		\end{split}
		\end{equation*}
		
		Put\[\overline{mdim}_{FK}^P(T,Z,d)=\limsup\limits_{\epsilon\to0}\frac{\mathcal{P}_{FK}(T,d,Z,\epsilon)}{\log\frac{1}{\epsilon}}\]
		
		We call the number $\overline{mdim}_{FK}^P(T,Z,d)$ as FK-Packing  metric mean dimension of $T$ on the set $Z$.
	
		\hspace*{\fill}\
		\subsection{Measure-theoretical local entropy in FK metric}
		Let $\mu \in M(X)$,  
		following the idea of Brin and Katok, we define the following quantities\[\overline{h}_{\mu}^{FK}(T,\epsilon):=\int\overline{h}_{\mu}^{FK}(T,x,\epsilon)d\mu,\]\[\underline{h}_{\mu}^{FK}(T,\epsilon):=\int\underline{h}_{\mu}^{FK}(T,x,\epsilon)d\mu,\]
		
		where\[\overline{h}_{\mu}^{FK}(T,x,\epsilon)=\limsup_{n\to\infty}-\frac{\log\mu(B_{FK_n}(x,\epsilon))}{n},\]\[\underline{h}_{\mu}^{FK}(T,x,\epsilon)=\liminf_{n\to\infty}-\frac{\log\mu(B_{FK_n}(x,\epsilon))}{n}.\]

		\subsection{Weighted FK-Bowen  metric mean dimension}
		
		For any function $f\in C(X,R)$, $f:X\to [ 0,+\infty)$, $s\geq0$, $N\in\mathbb{N}$ and $\epsilon>0$, define \[W_{FK}(T,f,d,X,s,N,\epsilon)=inf\{\sum\limits_{i\in I}c_ie^{-n_{i}s}\},\]where infimum takes all the finite or countable covers $\{(B_{FK_{n_i}}(x_i,\epsilon),c_i)\}_{i\in I}$, such that $0<c_i<\infty$, $x_i\in X$, $n_i\geq\mathbb{N}$ and \[\sum_{i\in I}c_i\chi_{B_i}\geq f,\]where $B_i=B_{FK_{n_i}}(x_i,\epsilon)$, and $\mathcal{X}$ denotes the characteristic function of $B_i$.
		
		For $Z\subset X$, $f=\chi_Z$, we set $W_{FK}(T,d,Z,s,N,\epsilon)=W_{FK}(T,\chi_Z,d,X,s,N,\epsilon)$. We can find the quantity $W_{FK}(T,d,Z,s,N,\epsilon)$ dose not decrease as $N$ increases, so define\[W_{FK}(T,d,Z,s,\epsilon)=\lim_{N\to\infty}W_{FK}(T,d,Z,s,N,\epsilon)\]
		
		We can find that the quantity $W_{FK}(T,d,Z,s,\epsilon)$ has a critical value of parameter $s$ jumping from $\infty$ to $0$. We define such critical value as
		\begin{equation*}
			\begin{split}	
				W_{FK}(T,d,Z,\epsilon):&=\inf\{s:W_{FK}(T,d,Z,s,\epsilon)=0\}\\
				&=\sup\{s:W_{FK}(T,d,Z,s,\epsilon)=+\infty\}.
			\end{split}
		\end{equation*}
		
		Put\[\overline{Wmdim}_{FK}^B(T,Z,d)=\limsup\limits_{\epsilon\to0}\frac{W_{FK}(T,d,Z,\epsilon)}{\log\frac{1}{\epsilon}}\]
		
		We call the number $\overline{Wmdim}_{FK}^B(T,Z,d)$ as weighted FK-Bowen  metric mean dimension.

		\section{Proof of Theorem \ref{thm-main}}
		 We first introduce some lemmas.		
		The following lemma is the famous $5r$-covering lemma.
		
		\begin{lem}\label{lem-5r} Let $(X,d)$ is a compact metric space, $\mathcal{B}=\{B(x_i,r_i)\}_{i\in I}$ be a family of closed (or open) balls in $X$. Then there exists a finite or countable subfamily $\mathcal{B}^{'}=\{B(x_i,r_i)\}_{i\in I^{'}}$ of pairwise disjoint balls in $\mathcal{B}$ such that\[\mathop{\cup}\limits_{B\in\mathcal{B}}B\subset \mathop{\cup}\limits_{i\in I^{'}}B(x_i,5r_i).\]
		\end{lem}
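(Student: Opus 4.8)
The plan is to use the classical greedy construction, grouping the balls according to dyadic radius scales. First I would dispose of the trivial case: if some radius $r_i$ exceeds $\diam(X)$ then $B(x_i,r_i)=X$, and the single-ball subfamily $\mathcal{B}'=\{B(x_i,r_i)\}$ already satisfies the conclusion; so I may assume, using compactness of $X$, that $R:=\sup_{i\in I}r_i\le\diam(X)<\infty$. I would then partition the index set into generations by the size of the radius: for each $n\ge1$ set $I_n=\{i\in I:\ R/2^{n}<r_i\le R/2^{n-1}\}$, so that $I=\bigsqcup_{n\ge1}I_n$.

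Next I would build $\mathcal{B}'$ inductively over the generations. By Zorn's lemma choose $\mathcal{B}'_1$ to be a maximal pairwise disjoint subfamily of $\{B(x_i,r_i):i\in I_1\}$; and, having chosen $\mathcal{B}'_1,\dots,\mathcal{B}'_{n-1}$, let $\mathcal{B}'_n$ be a maximal pairwise disjoint subfamily of those balls $B(x_i,r_i)$, $i\in I_n$, that are disjoint from every ball already lying in $\mathcal{B}'_1\cup\dots\cup\mathcal{B}'_{n-1}$. Set $\mathcal{B}'=\bigcup_{n\ge1}\mathcal{B}'_n$. By construction the balls in $\mathcal{B}'$ are pairwise disjoint, and since a compact metric space is separable, any family of pairwise disjoint nonempty open subsets of $X$ is at most countable; discarding empty balls and passing to interiors in the closed-ball case, it follows that $\mathcal{B}'$ is finite or countable.

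It remains to verify the covering property. Fix $B(x_j,r_j)\in\mathcal{B}$ and let $n$ be the generation with $j\in I_n$. By the maximality of $\mathcal{B}'_n$, the ball $B(x_j,r_j)$ cannot be disjoint from every ball in $\mathcal{B}'_1\cup\dots\cup\mathcal{B}'_n$; choose $B(x_i,r_i)$ in this union meeting $B(x_j,r_j)$, with $i\in I_m$ for some $m\le n$. Then $r_i>R/2^{m}\ge R/2^{n}$ while $r_j\le R/2^{n-1}=2\cdot R/2^{n}$, hence $r_j<2r_i$. Picking $z\in B(x_j,r_j)\cap B(x_i,r_i)$ and any $y\in B(x_j,r_j)$, the triangle inequality gives $d(y,x_i)\le d(y,x_j)+d(x_j,z)+d(z,x_i)<r_j+r_j+r_i<5r_i$, so $B(x_j,r_j)\subset B(x_i,5r_i)$; since this holds for every $B\in\mathcal{B}$ we obtain $\bigcup_{B\in\mathcal{B}}B\subset\bigcup_{i\in I'}B(x_i,5r_i)$. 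The argument is routine; the only point needing care is the bookkeeping in the induction — the maximal disjoint subfamily in generation $n$ must be taken relative to all balls already selected in generations $1,\dots,n$, because it is exactly this relative maximality that forces each ball of generation $n$ to meet a previously chosen ball of comparable-or-larger radius, which is precisely what the factor $5$ absorbs. Compactness enters only through separability, to guarantee that the disjoint family $\mathcal{B}'$ is countable.
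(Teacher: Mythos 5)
Your proof is correct: the dyadic decomposition of radii into generations $R/2^{n}<r_i\le R/2^{n-1}$, the iterated choice of maximal pairwise disjoint subfamilies relative to all previously selected balls, and the triangle-inequality estimate via $r_j<2r_i$ that yields the factor $5$ constitute exactly the standard argument, and your reduction to bounded radii and the countability via separability are handled properly. The paper itself gives no proof of this lemma but only cites Mattila's book, and your argument is essentially the proof found in that reference, so it matches the intended approach.
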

	\begin{proof}
		See Theorem 2.11 in \cite{Mattila}.	
	\end{proof}	
		
	The next lemma shows the relation between the FK metric mean dimension and the weighted FK metric mean dimension.
		
	 \begin{lem}\label{lem-weight}
	 	Let $Z\subset X$, $s\geq0$, $\epsilon,\delta>0$, we have\[M_{FK}(T,d,Z,s+\delta,N,6\epsilon)\leq W_{FK}(T,d,Z,s,N,\epsilon)\leq M_{FK}(T,d,Z,s,N,\epsilon),\]when $N$ is large enough, and there is\[\overline{mdim}_{FK}^B(T,Z,d)=\overline{Wmdim}_{FK}^B(T,Z,d).\]
	\end{lem}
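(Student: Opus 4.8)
The plan is to prove the two-sided chain of inequalities first, for fixed $\epsilon$ and large $N$, and then pass to the limit in $\epsilon$ to deduce the equality of the two mean dimensions. The right-hand inequality $W_{FK}(T,d,Z,s,N,\epsilon)\le M_{FK}(T,d,Z,s,N,\epsilon)$ is the easy direction: given any countable cover $\{B_{FK_{n_i}}(x_i,\epsilon)\}_{i\in I}$ of $Z$ with $n_i\ge N$, assign weights $c_i=1$. Then $\sum_{i\in I}c_i\chi_{B_i}=\sum_i\chi_{B_i}\ge\chi_Z$ because the $B_i$ cover $Z$, so this is an admissible weighted family, and its weighted sum $\sum_i c_i e^{-n_i s}=\sum_i e^{-n_i s}$ is exactly the quantity being infimized on the right. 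Taking infimum over covers gives $W_{FK}\le M_{FK}$ at the level of $(s,N,\epsilon)$.

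The left-hand inequality $M_{FK}(T,d,Z,s+\delta,N,6\epsilon)\le W_{FK}(T,d,Z,s,N,\epsilon)$ is where the $5r$-covering lemma enters and is the main obstacle. Start from a nearly optimal weighted family $\{(B_{FK_{n_i}}(x_i,\epsilon),c_i)\}_{i\in I}$ with $\sum_i c_i\chi_{B_i}\ge\chi_Z$ and $\sum_i c_i e^{-n_i s}\le W_{FK}(T,d,Z,s,N,\epsilon)+1$ (or any fixed tolerance). Following the standard Feng--Huang dyadic-layering argument, for each $n\ge N$ and each integer $k\ge 1$ put $I_{n,k}=\{i\in I: n_i=n,\ 2^{-k}\le c_i<2^{-k+1}\}$ and $Z_{n,k}=\{x\in Z:\sum_{i\in I_{n,k}}\chi_{B_i}(x)> \text{(a suitable threshold, e.g. } N^2 2^{-k}\text{)}\}$; one checks that the sets $Z_{n,k}$ over all $n\ge N,k\ge1$ cover $Z$ (any $x\in Z$ has total weight $\ge1$, so some single layer must contribute a definite amount). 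On $Z_{n,k}$ the balls $\{B_{FK_n}(x_i,\epsilon):i\in I_{n,k}\}$ cover $Z_{n,k}$ with high multiplicity; here I would invoke Lemma \ref{lem-5r} on the metric space $(X,d_{FK_n})$ — noting $d_{FK_n}$ is a genuine metric for each fixed $n$ — to extract a pairwise disjoint subfamily whose $5$-fold dilations $B_{FK_n}(x_i,5\epsilon)$ still cover $Z_{n,k}$; since a ball of radius $5\epsilon$ in $d_{FK_n}$ is contained in one of radius $6\epsilon$ (trivially, $5\epsilon<6\epsilon$), we obtain a genuine cover of $Z_{n,k}$ by $6\epsilon$-balls whose cardinality is controlled, via disjointness and the multiplicity bound, by $\lesssim 2^k \sum_{i\in I_{n,k}} c_i$ up to the polynomial factor $N^2$. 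Summing $e^{-n(s+\delta)}$ over this cover and then over all layers $(n,k)$, the geometric factor $e^{-n\delta}$ absorbs both the polynomial $N^2$ and the $\sum_k$ summation once $N$ is large enough (this is exactly where "$N$ large" and the loss $\delta$ are used), yielding $M_{FK}(T,d,Z,s+\delta,N,6\epsilon)\le C\bigl(W_{FK}(T,d,Z,s,N,\epsilon)+1\bigr)$; letting the tolerance shrink and then $N\to\infty$ gives the stated inequality.

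Finally, to deduce $\overline{mdim}_{FK}^B(T,Z,d)=\overline{Wmdim}_{FK}^B(T,Z,d)$: the inequality $W_{FK}(T,d,Z,s,\epsilon)\le M_{FK}(T,d,Z,s,\epsilon)$ passes to the critical values and then to the $\limsup_{\epsilon\to0}$, giving $\overline{Wmdim}_{FK}^B\le\overline{mdim}_{FK}^B$. For the reverse, the left inequality shows $M_{FK}(T,d,Z,s+\delta,6\epsilon)\le W_{FK}(T,d,Z,s,\epsilon)$, so comparing critical values yields $M_{FK}(T,d,Z,6\epsilon)\le W_{FK}(T,d,Z,\epsilon)+\delta$; dividing by $\log\frac{1}{6\epsilon}$, using $\log\frac1{6\epsilon}=\log\frac1\epsilon-\log 6$ so that the ratios $\frac{M_{FK}(T,d,Z,6\epsilon)}{\log\frac1{6\epsilon}}$ and $\frac{M_{FK}(T,d,Z,6\epsilon)}{\log\frac1{6\epsilon}}$ have the same $\limsup$ as $\epsilon\to0$, and then letting $\delta\to0$, gives $\overline{mdim}_{FK}^B\le\overline{Wmdim}_{FK}^B$. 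Combining the two directions finishes the proof. The only genuinely delicate point is bookkeeping in the dyadic decomposition — choosing the multiplicity threshold so that the $Z_{n,k}$ cover $Z$ while the cardinality estimate for the extracted disjoint subfamily remains of the right order — everything else is routine once the $5r$-lemma is available in the metric $d_{FK_n}$.
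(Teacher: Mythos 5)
Your easy inequality $W_{FK}\le M_{FK}$ and your final passage from the two-sided inequality to $\overline{mdim}_{FK}^B(T,Z,d)=\overline{Wmdim}_{FK}^B(T,Z,d)$ (shifting the critical value by $\delta$, replacing $\epsilon$ by $6\epsilon$, and noting that the normalization $\log\frac1\epsilon$ kills both) are correct and agree with the paper. The gap is in the core inequality $M_{FK}(T,d,Z,s+\delta,N,6\epsilon)\le W_{FK}(T,d,Z,s,N,\epsilon)$. Your dyadic layering $I_{n,k}=\{i:n_i=n,\ 2^{-k}\le c_i<2^{-k+1}\}$ combined with a single application of Lemma \ref{lem-5r} does not give the cardinality bound you need. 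The $5r$-lemma produces a pairwise disjoint subfamily whose $5\epsilon$-dilations cover $Z_{n,k}$, but disjointness alone gives no bound of the shape (cardinality) $\lesssim\frac{1}{\text{threshold}}\sum_{i\in I_{n,k}}c_i$: in a general compact metric space (and $d_{FK_n}$ has no doubling or Besicovitch-type property) high covering multiplicity on $Z_{n,k}$ does not limit the size of a disjoint subfamily. The bound you actually write, cardinality $\lesssim 2^k\sum_{i\in I_{n,k}}c_i$, is just the trivial estimate $\#I_{n,k}\le 2^k\sum_{i\in I_{n,k}}c_i$, and it is too weak: summing $e^{-(s+\delta)n}$ times this over all layers leaves a factor $2^k$ that is unbounded in $k$ and cannot be absorbed by $e^{-\delta n}$, which is independent of $k$, so the total is not comparable to $\sum_i c_ie^{-sn_i}$. (In addition, the suggested threshold $N^2 2^{-k}$ is not summable over $(n,k)$, so the sets $Z_{n,k}$ need not cover $Z$; the thresholds must sum to less than $1$, e.g. of the type $t\,n^{-2}2^{-k}$.)

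The missing counting step is precisely what the paper's Step 1 supplies: approximate the $c_i$ by positive integers, introduce integer-valued weight functions $v_0=v,v_1,\dots,v_m$ with $m\ge t$, apply Lemma \ref{lem-5r} once per unit of weight to obtain $m$ disjoint families $\mathcal{B}_1,\dots,\mathcal{B}_m$, each of whose $5\epsilon$-dilations covers $Z_{n,k,t}$, and then take the family of smallest cardinality; the telescoping inequality $\sum_{j}\#(\mathcal{B}_j)e^{-sn}\le\sum_{i\in I_{n,k}}c_ie^{-sn}$ is what yields the gain $\#(\mathcal{J}_{n,k,t})e^{-sn}\le\frac1t\sum_{i\in I_{n,k}}c_ie^{-sn}$, i.e. exactly the factor your sketch asserts via ``multiplicity'' but does not prove. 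Note also that this selection argument is carried out on the finite truncations $I_{n,k}=\{i\in I_n:i\le k\}$, and the paper then needs a Hausdorff-limit compactness argument on the center sets $E_{n,k,t}$ to handle countable $I_n$; that limit is where the enlargement from $5\epsilon$ to $6\epsilon$ genuinely enters, whereas you treat $6\epsilon$ as harmless slack and work directly with possibly infinite layers. Unless you can substantiate a multiplicity-versus-disjointness count in this generality, you should replace your layering step by the iterated Vitali selection.
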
	
\begin{proof}.
	In the proof, we will follow Feng and Huang's method \cite{FH}.
	
	Let $Z\subset X$, $s\geq0$, $\epsilon,\delta>0$, $f=\chi_Z$, $c_i=1$. From the definition, we have $W_{FK}(T,d,Z,s,N,\epsilon)\leq M_{FK}(T,d,Z,s,N,\epsilon)$, for any $N\in\mathbb{N}$. 
	
	Next we will we will prove
	\begin{equation}\label{eq1}
	 M_{FK}(T,d,Z,s+\delta,N,6\epsilon)\leq W_{FK}(T,d,Z,s,N,\epsilon),
	\end{equation}
	when $N$ is large enough.

	We choose $N>2$ such that $n^2e^{-n\delta}<1$ for $n\geq N$. Let $\{(B_{FK_{n_i}}(x_i,\epsilon),c_i\}_{i\in I}$ be a family so that $I\subset\mathbb{N}$, $x_i\in X$, $0<c_i<\infty$, $n_i\geq N$ and 
	\begin{equation}\label{eq2}
    \sum_{i\in I}c_i\chi_{B_i}\geq\chi_Z,
	\end{equation} where $B_i=B_{FK_{n_i}}(x_i,\epsilon)$.
    Next we will show 
    \begin{equation}\label{eq3}
	 M_{FK}(T,d,Z,s+\delta,N,6\epsilon)\leq \sum_{i\in I}c_ie^{-s n_i},
	\end{equation} 
	 which implies that \eqref{eq1} holds.
	
	Let $I_n=\{i\in I:n_i=n\}$, $I_{n,k}=\{i\in I_n:i\leq k\}$ for $n\geq N$ and $k\in\mathbb{N}$. For simplicity we denote $B_i:=B_{FK_{n_i}}(x_i,\epsilon)$ and $5B_i:=B_{FK_{n_i}}(x_i,5\epsilon)$ for $i\in I$. We may assume $B_i\neq B_j$ when $i\neq j$. For any $t>0$, let
	\[Z_{n,t}=\{x\in Z:\sum_{i\in I_n}c_i\chi_{B_i}(x)>t\}\text{ \ and  \ } Z_{n,k,t}=\{x\in Z:\sum_{i\in I_{n,k}}c_i\chi_{B_i}(x)>t\}.\]
	
	We divide the proof of \eqref{eq3} into the following three steps.
	
	\hspace*{\fill}\
	
	\textbf{Step1:} For each  $n\geq N$, $k\in\mathbb{N}$, and $t>0$, there exists a finite set $\mathcal{J}_{n,k,t}\subset I_{n,k}$ such that the balls $B_i(i\in\mathcal{J}_{n,k,t})$ are pairwise disjoint, $Z_{n,k,t}\subset\cup_{i\in\mathcal{J}_{n,k,t}}5B_i$ and
	\begin{equation}\label{eq4}
	\#(\mathcal{J}_{n,k,t})e^{-sn}\leq\frac{1}{t}\sum_{i\in I_{n,k}}c_ie^{-sn}.
	\end{equation}
	
	Now we start to prove the above result. Since $I_{n,k}$ is finite, by approximating the $c_i$'s from above, we may assume that each $c_i$ is a positive rational.  By mulitiplying with common denominator, we may further assume every $c_i$ is a positive integer. Let $m$ be the smallest integer of $m\geq t$. Let $\mathcal{B}=\{B_i:i\in I_{n,k}\}$ and we define $v:\mathcal{B}\to\mathbb{Z}$ by $v(B_i)=c_i$. Then we can inductively define the integer-valued functions $v_0,...,v_m$ on $\mathcal{B}$ and subfamilies $\mathcal{B}_1,...,\mathcal{B}_m$ of $\mathcal{B}$ starting with $v_0=v$. Using Lemma \ref{lem-5r}(in which we take the
	metric $d_{FK_n}$ instead of $d$) we find  a pairwise disjoint subfamily $\mathcal{B}_1$ of $\mathcal{B}$ such that $$Z_{n,k,t}\subset \cup_{B\in\mathcal{B}}B\subset\cup_{B\in\mathcal{B}_1}5B.$$

	By repeatedly using Lemma \ref{lem-5r}, we can define disjoint subfamilies $\mathcal{B}_j$ of $\mathcal{B}$ inductively for $j=1,...,m$  such that
	\begin{equation} \label{eq5} 
	\mathcal{B}_j\subset\{B\in\mathcal{B}:v_{j-1}(B)\geq 1\},\:Z_{n,k,t}\subset\cup_{B\in\mathcal{B}_j}5B,
	\end{equation}
	and the function $v_j$ such that
	\begin{equation}\label{eq6}
		v_j(B)=\left\{\begin{array}{ll} v_{j-1}(B)-1,B\in\mathcal{B}_j;\\v_{j-1}(B),B\in \mathcal{B} \setminus\mathcal{B}_j.
	\end{array}\right.
	\end{equation}
	
    For every $j<m$, since every $x \in Z_{n,k,t}$ belongs to some 
    	$B\in \mathcal{B} $ with $\nu_j(B) \ge 1$, we have 
     \[Z_{n,k,t}\subset\bigg\{x:\sum_{B\in\mathcal{B},x\in B}v_j(B)\geq m-j \bigg\}.\]
	Hence \eqref{eq5} and \eqref{eq6} holds. Then
 
	\begin{eqnarray*}	
	\sum_{j=1}^{m}\#(\mathcal{B}_j)e^{-sn}
	&=&\sum_{j=1}^{m}\sum_{B\in\mathcal{B}_j}(v_{j-1}(B)-v_j(B))e^{-sn}\\
	&\leq& \sum_{B\in\mathcal{B}}\sum_{j=1}^{m}(v_{j-1}(B)-v_j(B))e^{-s n}\\
	&=&\sum_{B\in\mathcal{B}}(v_0(B)-v_m(B))e^{-s n}\\
	&\leq&\sum_{B\in\mathcal{B}}v(B)e^{-s n}=\sum_{i\in I_{n,k}}c_ie^{-s n}.
	\end{eqnarray*}
	
	Choose $j_0\in\{1,...,m\}$ so that $\#(\mathcal{B}_{j_0})$ is smallest. Then\[\#(\mathcal{B}_{j_0})e^{-s n}\leq\frac{1}{m}\sum_{i\in I_{n,k}}c_ie^{-s n}\leq\frac{1}{t}\sum_{i\in I_{n,k}}c_ie^{-sn}.\]
	
	Hence $\mathcal{J}_{n,k,t}=\{i\in I:B_i\in\mathcal{B}_{j_0}\}$ is as desired.
	
	\hspace*{\fill}\
	
	\textbf{Step 2: }For each $n\geq N$ and $t>0$, we have
	\begin{equation}\label{eq7}
	M_{FK}(T,d,Z_{n,t},s+\delta,N,\epsilon)\leq\frac{1}{n^2t}\sum_{i\in I_n}c_ie^{-sn}.
	\end{equation}
	
	To see this, we may assume $Z_{n,t}\neq\emptyset$. Since $Z_{n,k,t}\uparrow Z_{n,t}$, $Z_{n,k,t}\neq\emptyset$ when $k$ is large enough. Let  $\mathcal{J}_{n,k,t}$ be the set defined in Step 1. We define $E_{n,k,t}=\{x_i:i\in\mathcal{J}_{n,k,t}\}$. Note that the family of all non-empty subsets of $X$ is compact with respect to Hausdorff distance. It follows that there is a subsequence $\{k_j\}_{j=1}^{\infty}$ of natural numbers and a non-empty compact set $E_{n,t}\subset X$ such that $E_{n,k_j,t}\to E_{n,t}$ in the Hausdorff distance as $j\to\infty$. Since any two points in $E_{n,t}$ have a distance (with respect to $d_{FK_n}$) not less than $\epsilon$, (because of $E_{n,k,t}=\{x_i:i\in\mathcal{J}_{n,k,t}\}$ and $\mathcal{J}_{n,k,t}=\{i\in I:B_i\in\mathcal{B}_{j_0}\}$ and $B_i\neq B_j$ when $i\neq j$.) so do the points in $E_{n,t}$. Thus, $E_{n,t}$ is a finite set, moreover, $\#(E_{n,k_j,t})=\#(E_{n,t})$ when $j$ is large enough.
	 Hence
	 \begin{equation}
		\bigcup_{x\in E_{n,t}}B_{FK_n}(x,5.5\epsilon)\supset\bigcup_{x\in E_{n,k_j,t}}B_{FK_n}(x,5\epsilon)=\bigcup_{i\in\mathcal{J}_{n,k_j,t}}5B_i\supset Z_{n,k_j,t},
	 \end{equation}
      when $j$ is large enough, and thus $\cup_{x\in E_{n,t}}B_{FK_n}(x,6\epsilon)\supset Z_{n,t}$. Since $\#(E_{n,k_j,t})=\#(E_{n,t})$ when $j$ is large enough, we have
    \begin{equation}
    \#(E_{n,t})e^{-s n}\leq\frac{1}{t}\sum_{i\in I_n}c_ie^{-s n}.
	\end{equation}
	
	Then we have
	\begin{equation*}
	\begin{split}	
	M_{FK}(T,d,Z_{n,t},s+\delta,N,6\epsilon)&\leq\#(E_{n,t})e^{-(s+\delta)n}\\
	&\leq\frac{1}{e^{n\delta}t}\sum_{i\in I_n}c_ie^{-s n}\leq\frac{1}{n^2t}\sum_{i\in I_n}c_ie^{-s n}
	\end{split}
	\end{equation*}
	
	\hspace*{\fill}\
	
	\textbf{Step 3:} For any $t\in(0,1)$, we have \[M_{FK}(T,d,Z,s+\delta,N,6\epsilon)\leq\frac{1}{t}\sum_{i\in I}c_ie^{-s n_i}\]
	
	To see this, fix $t\in(0,1)$. Note that $\sum_{n=N}^{\infty}n^{-2}<1$. It follows that $Z\subset\cup_{n=N}^{\infty}Z_{n,n^{-2}t}$ from \eqref{eq2}. Then by \eqref{eq7} and $M_{FK}(T,d,\cdot,s+\delta,N,6\epsilon)$ is an outer measure, we have
	\begin{equation*}
	\begin{split}	
	M_{FK}(T,d,Z,s+\delta,N,6\epsilon)&\leq\sum_{n=N}^{\infty}M_{FK}(T,d,Z_{n,n^{-2}t},s+\delta,N,6\epsilon)\\
	&\leq\sum_{n=N}^{\infty}\frac{1}{t}\sum_{i\in I_n}c_ie^{-s n} =\frac{1}{t}\sum_{i\in I}c_ie^{-s n_i}.
	\end{split}
	\end{equation*}
	Let $t \rightarrow 1$, then \eqref{eq3} holds, and we finish the proof.
\end{proof}

\medskip
Next, we will prove an Frostman's lemma in FK metric.
\begin{lem} \label{lem-frostman}
	Let $K$ be a non-empty compact subset of $X$ and $s\geq 0$, $N\in\mathbb{N}$, $\epsilon>0$.  Suppose that $c:=W_{FK}(T,d,K,s,N,\epsilon)>0$. Then there exists a Borel probability measure $\mu\in M(X)$ such that $\mu(K)=1$ and\[\mu(B_{FK_n}(x,\epsilon))\leq\frac{1}{c}e^{-sn}\]holds for all $x\in X$, $n\geq N$.
\end{lem}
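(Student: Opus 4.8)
The plan is to follow the Hahn--Banach/Riesz strategy of Feng and Huang \cite{FH}: build a sublinear functional out of the weighted quantity $W_{FK}$, dominate a linear functional by it, represent that functional by a Borel measure, and then cut the measure down to $K$. I would first record the elementary properties of $g\mapsto W_{FK}(T,g,d,X,s,N,\epsilon)$ on the cone $C(X)^{+}$ of non-negative continuous functions (I suppress the fixed data $T,d,X,s,N,\epsilon$ in the notation from now on): \emph{monotonicity} (a weighted cover dominating $g$ dominates every smaller function), \emph{positive homogeneity} (rescale the weights $c_i$), and \emph{subadditivity} (juxtapose two weighted covers). It is also finite: $X$ is compact and, by Lemma~2.1 together with uniform continuity of $T,\dots,T^{N-1}$, can be covered by finitely many, say $L$, open $\epsilon$-balls of $d_{FK_N}$ (these are open because $y\mapsto\bar{f}_{n,\delta}(x,y)$ is upper semicontinuous), and assigning each of them the weight $\|g\|_\infty$ gives $W_{FK}(g)\le L\|g\|_\infty e^{-Ns}<\infty$; in particular $0<c<\infty$. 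Then I set $p(g)=\tfrac1c W_{FK}(g^{+})$ for $g\in C(X)$, which is sublinear on $C(X)$, and extend it to the space $B(X)$ of bounded functions by $\tilde p(f)=\inf\{p(g):g\in C(X),\ g\ge f\}$. One checks that $\tilde p$ is sublinear on $B(X)$, restricts to $p$ on $C(X)$, satisfies $\tilde p(f)\le0$ whenever $f\le0$, and — the crucial point — $\tilde p(\chi_K)\ge1$, since every $g\in C(X)$ with $g\ge\chi_K$ has $g^{+}\ge\chi_K$, whence $W_{FK}(g^{+})\ge W_{FK}(\chi_K)=W_{FK}(T,d,K,s,N,\epsilon)=c$.

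On the line $Y=\mathbb{R}\chi_K\subset B(X)$ put $\phi_0(\lambda\chi_K)=\lambda$; from $\tilde p(\chi_K)\ge1$ and $\tilde p(f)\le0$ for $f\le0$ one gets $\phi_0\le\tilde p$ on $Y$, so Hahn--Banach furnishes a linear $\phi\colon B(X)\to\mathbb{R}$ with $\phi\le\tilde p$. Applying this to $-f$ with $f\ge0$ shows $\phi$ is positive, hence monotone, and $\phi(\mathbf{1})\ge\phi(\chi_K)=1$. Restricting $\phi$ to $C(X)$ and invoking the Riesz representation theorem gives a finite Borel measure $\nu$ with $\int g\,d\nu=\phi(g)$ and total mass $\phi(\mathbf{1})\ge1$. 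Regularity of $\nu$ then yields the two facts that matter. First, $\nu(K)=\inf\{\int g\,d\nu:g\in C(X),\ g\ge\chi_K\}$, and each such integral equals $\phi(g)\ge\phi(\chi_K)=1$, so $\nu(K)\ge1$. Second, for each open ball $B=B_{FK_n}(x,\epsilon)$ with $n\ge N$ one has $\nu(B)=\sup\{\int g\,d\nu:g\in C(X),\ 0\le g\le\chi_B\}$, and each such integral equals $\phi(g)\le p(g)=\tfrac1c W_{FK}(g)\le\tfrac1c W_{FK}(\chi_B)\le\tfrac1c e^{-ns}$, the last inequality coming from the one-ball weighted cover $\{(B,1)\}$; hence $\nu(B)\le\tfrac1c e^{-ns}$. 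Finally I put $\mu=\nu(\,\cdot\,\cap K)/\nu(K)$, a Borel probability measure with $\mu(K)=1$; since $\nu(K)\ge1$, this gives $\mu(B_{FK_n}(x,\epsilon))\le\nu(B_{FK_n}(x,\epsilon))\le\tfrac1c e^{-ns}$ for all $x\in X$ and $n\ge N$, which is the assertion.

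The step demanding the most care — and where the Feldman--Katok framework departs from the usual Bowen-metric argument — is the interface between the abstract functional $\phi$ and the measure $\nu$: I need the FK-balls $B_{FK_n}(x,\epsilon)$ to be genuinely open, so that they are inner-approximated by continuous functions and the pointwise bound $\phi(g)\le\tfrac1c e^{-ns}$ upgrades to the set estimate $\nu(B)\le\tfrac1c e^{-ns}$; this openness follows from the lower semicontinuity in $y$ of the maximal match size $\max\{\,|\pi|:\pi\ \text{is an}\ (n,\delta)\text{-match of }x\text{ and }y\,\}$. I must also keep the truncation $(\,\cdot\,)^{+}$ consistent through the extension, so that $\tilde p$ retains \emph{both} $\tilde p(\chi_K)\ge1$ \emph{and} $\tilde p(g)=\tfrac1c W_{FK}(g)$ for $g\in C(X)^{+}$. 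The remaining verifications — sublinearity of $p$ and $\tilde p$, and the standard regularity identities expressing $\nu$ of an open (resp.\ closed) set as a supremum (resp.\ infimum) of integrals of continuous functions — are routine.
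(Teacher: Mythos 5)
Your proposal is correct, and at its core it is the same Feng--Huang Frostman argument the paper uses: a sublinear functional built from $W_{FK}$, a Hahn--Banach extension dominated by it, the Riesz representation theorem, and then the one-ball cover $\{(B_{FK_n}(x,\epsilon),1)\}$ together with Urysohn/regularity to turn the functional estimate into the measure estimate. The difference is purely in where $K$ enters. The paper puts $K$ inside the functional, $p(f)=\frac{1}{c}W_{FK}(T,\chi_K\cdot f,d,X,s,N,\epsilon)$ on $C(X)$, and extends $t\mapsto tp(\mathbf{1})$ from the constants; since $p(\mathbf{1})=1$, the Riesz measure is already a probability measure, and $p(f)=0$ for $f$ vanishing on $K$ yields $\mu(K)=1$ directly, so no restriction or normalization is needed. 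You keep $K$ out of the functional, $p(g)=\frac{1}{c}W_{FK}(g^{+})$, extend $p$ to the bounded functions, anchor Hahn--Banach on the line $\mathbb{R}\chi_K$ using $\tilde p(\chi_K)\ge1$, and recover $\mu$ by restricting to $K$ and normalizing; this works because $\nu(K)\ge1$, so dividing by $\nu(K)$ only improves the ball bound. Your route avoids plugging the discontinuous weight $\chi_K\cdot f$ into $W_{FK}$ (harmless in the paper, since the definition only needs a non-negative function), at the cost of the extra extension to bounded functions and the final normalization; it also makes explicit two points the paper leaves implicit, namely $W_{FK}(\mathbf{1})<\infty$ and the openness of FK-balls, which both proofs need for the Urysohn/inner-regularity step. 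One small slip in your write-up: for $\lambda<0$ the inequality $\phi_0(\lambda\chi_K)\le\tilde p(\lambda\chi_K)$ does not follow from ``$\tilde p(f)\le0$ when $f\le0$'' (that is an upper bound on $\tilde p$); it holds simply because $\tilde p\ge0$ pointwise, as $p\ge0$. This is cosmetic and does not affect the argument.
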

	\begin{proof}
		Clearly $c<\infty$. We define a function $p$ on the space $C(X)$, $C(X)$ is a space consisting of all continuous real-valued functions on $X$.\[p(f)=\frac{1}{c}W_{FK}(T,\mathcal{X}_K\cdot f,d,X,s,N,\epsilon)\]
		
		Let $\textbf{1}\in C(X)$ denote the constant function $\textbf{1}(x)\equiv1$. It is easy to verify that 
		\\(1)$p(f+g)\leq p(f)+p(g)$ for any $f,\:g\in C(X).$
		\\(2)$p(tf)=tp(f)$ for any $t\geq0$ and $f\in C(X).$
		\\(3)$p(\textbf{1})=1,\:0\leq p(f)\leq\parallel f\parallel_{\infty}$ for any $f\in C(X)$, and $p(g)=0$ for $g\in C(X)$ with $g\leq0.$
		
		By the Hahn-Banach Theorem, we can extend the linear functional $t\mapsto tp(\textbf{1})$, $t\in\mathbb{R}$, from the subspace of the constant function to a linear functional $L:C(X)\to\mathbb{R}$ satisfying\[L(\textbf{1})=p(\textbf{1}) \:\:and\:\:-p(-f)\leq L(f)\leq p(f)\:\:for\:any\:f\in C(X).\]
		
		If $f\in C(X)$ with $f\geq0$, then $p(-f)=0$ and so $L(f)\geq 0$. Hence, combining the fact $L(\textbf{1})=1$, we can use the the Riesz Representation Theorem to find a Borel probability measure $\mu$ on X such that \[L(f)=\int fd\mu,\:\:\:\forall f\in C(X).\]
		
		Now, we show that $\mu(K)=1$. We see for any compact set $E\subset X\cap K$, by Uryson lemma there is $f\in C(X)$ such that $0\leq f\leq1$, $f(x)=1$ for $x\in E$ and $f(x)=0$ for $x\in K$. Then $f\cdot\chi_K\equiv0$ and thus $p(f)=0$. Hence $\mu(E)\leq L(K)\leq p(f)=0$. This shows $\mu(X\setminus K)=0$, i.e.$\mu(K)=1$.
		
		Following, we prove that\[\mu(B_{FK_n}(x,\epsilon))\leq\frac{1}{c}e^{-s n},\:\:\forall x\in X,\:n\geq N.\]
		
		We can find that for any compact set $E\subset B_{FK_n}(x,\epsilon)$, by Uryson lemma, there exists $f\in C(X)$, such that $0\leq f\leq1$, $f(y)=1$ for $y\in E$ and $f(y)=0$ for $y\in X\setminus B_{FK_n}(x,\epsilon)$. Then $\mu(E)\leq L(f)\leq p(f)$. Since $f\cdot\chi_K\leq\chi_{\mathcal{B}_{FK_n}(x,\epsilon)}$ and $n\geq N$, we have\[W_{FK}(T,\chi_K\cdot f,d,X,s,\epsilon)\leq e^{-s n},\]and thus $p(f)\leq \frac{1}{c}e^{-sn}$. Therefore,\[\mu(E)\leq\frac{1}{c}e^{-sn}.\]It follows that
		\begin{equation*}
		\begin{split}	
		\mu(B_{FK_n}(x,\epsilon))&=\sup\{\mu(E):E\:is\:a\:compact\:subset\:of\:B_{FK_n}(x,\epsilon)\}\\
		&\leq\frac{1}{c}e^{-s n}.
		\end{split}
		\end{equation*}
	\end{proof}

	    Using the above lemmas above, we can  prove Theorem \ref{thm-main}.
		\begin{proof}
		 We first prove
	$$\overline{mdim}_{FK}^B(T,K,d) \ge \limsup_{\epsilon\rightarrow 0}\frac{\sup\{\underline{h}_{\mu}^{FK}(T,\epsilon):\mu(K)=1,\mu\in M(X)\}. 	}{\log\frac{1}{\epsilon}}.$$

	 For any $\mu\in M(X)$, $\mu(K)=1$,  and $\epsilon>0$,  we only need to prove
	$$M_{FK}(T,d,K,\frac{\epsilon}{2})\geq \underline{h}_{\mu}^{FK}(T, \epsilon) =\int\underline{h}_{\mu}^{FK}(T,x,\epsilon)d\mu.$$
		
		Fix $\epsilon>0$ and $l\in\mathbb{N}$, let
		$$\alpha=min\{l,\int\underline{h}_{\mu}^{FK}(T,x,\epsilon)d\mu-\frac{1}{l}\}.$$
		Then there exist a Borel set $A_l\subset X$ with $\mu(A_l)>0$ and $N\in\mathbb{N}$ such that
		$$\mu(B_{FK_n}(x,\epsilon))\leq e^{-\alpha n},\:x\in A_l,\:n\geq N.$$
		
		Now, let $\{B_{FK_{n_i}}(x_i,\frac{\epsilon}{2})\}_{i\in I}$ be a  countable or finite family so that $x_i\in X$, $n_i\geq N$ and $\cup_{i\in I}B_{FK_{n_i}}(x_i,\frac{\epsilon}{2})\supset K\cap A_l$. Let\[I_1=\{i\in I:B_{FK_{n_i}}(x_i,\frac{\epsilon}{2})\cap(A_l\cap K)\neq\emptyset\}.\]
		
		Choose $y_i\in B_{FK_{n_i}}(x_i,\frac{\epsilon}{2})\cap(A_l\cap K)$, then
		\begin{equation*}
			\begin{split}	
				\sum_{i\in I}e^{-\alpha n_i}
				&\geq \sum_{i\in I_1}e^{-\alpha n_i}\\
				&\geq \sum_{i\in I_1}\mu(B_{FK_{n_i}}(y_i,\epsilon))\geq
				\sum_{i\in I_1}\mu(B_{FK_{n_i}}(x_i,\frac{\epsilon}{2}))\\
				&\geq\mu(K\cap A_l)=\mu(A_l)>0
			\end{split}
		\end{equation*}
		
		It follows that\[M_{FK}(T,d,K,\alpha,N,\frac{\epsilon}{2})\geq M_{FK}(T,d,K\cap A_l,\alpha,N,\frac{\epsilon}{2})\geq\mu(A_l).\]
		
		Therefore\[M_{FK}(T,d,K,\frac{\epsilon}{2})\geq\alpha.\]
		
		Letting $l\to\infty$, we have\[M_{FK}(T,d,K,\frac{\epsilon}{2})\geq\int\underline{h}_{\mu}^{FK}(T,x,\epsilon)d\mu.\]
		
		Then\[\overline{mdim}_{FK}^B(T,K,d)\geq\limsup_{\epsilon\to0}\frac{1}{\log\frac{1}{\epsilon}}sup\{\underline{h}_{\mu}^{FK}(T,\epsilon):\mu(K)=1,\mu\in M(X)\}.\]
		
		\medskip
		
		Next we prove 
		$$\overline{mdim}_{FK}^B(T,K,d) \le \limsup_{\epsilon\rightarrow 0}\frac{\sup\{\underline{h}_{\mu}^{FK}(T,\epsilon):\mu(K)=1,\mu\in M(X)\}	}{\log\frac{1}{\epsilon}}.$$

		We assume that $\overline{mdim}_{FK}^B(T,K,d)>0$, by Lemma \ref{lem-weight}, we have \[\overline{mdim}_{FK}^B(T,K,d)=\overline{Wmdim}_{FK}^B(T,K,d).\]
		
		Let $0<\lambda<\overline{Wmdim}_{FK}^B(T,K,d)$. Then we can find a sequence $0<\epsilon_k<1$ that covergences to 0 as $k\to\infty$ so that\[\overline{Wmdim}_{FK}^B(T,K,d)=\lim\limits_{k\to\infty}\frac{W_{FK}(T,d,K,\epsilon_k)}{\log\frac{1}{\epsilon_k}}.\]
		
		Hence, fix a sufficiently large $k$ there is $N_0\in\mathbb{N}$ such that \[c:=W_{FK}(T,d,K,\lambda \log\frac{1}{\epsilon_k},N_0,\epsilon_k)>0,\] by Lemma \ref{lem-frostman}, there exists a Borel probability measure $\mu\in M(X)$ such that $\mu(K)=1$ and $$\mu(B_{FK_n}(x,\epsilon_k))\leq\frac{1}{c}e^{-\lambda \log\frac{1}{\epsilon_k}n},$$ holds for all $x\in X$, $n\geq N_0$.
		Hence 
		$$ \underline{h}_{\mu}^{FK}(T,x,\epsilon_k)=\liminf_{n \rightarrow \infty}-\frac{\mu(B_{FK_n}(x,\epsilon_k))}{n}\ge \lambda\log \frac{1}{\epsilon_k}, \forall x \in X ,$$
		and
		 \[\frac{\sup\{\underline{h}_{\mu}^{FK}(T,\epsilon_k),\mu\in M(X),\mu(K)=1\}}{\log\frac{1}{\epsilon_k}}\geq\lambda.\]
		Hence
		$$\limsup_{\epsilon\rightarrow 0}\frac{\sup\{\underline{h}_{\mu}^{FK}(T,\epsilon):\mu(K)=1,\mu\in M(X)\}. 	}{\log\frac{1}{\epsilon}}\ge \lambda.$$

	\end{proof}

		\section{Proofs of Theorem \ref{thm-packing}}

		 We first introduce some lemmas.

	\begin{lem}\label{flj}
		Let $Z\subset X$ and $s$, $\epsilon>0$. Assume $\mathcal{P}_{FK}(T,d,Z,s,\epsilon)=\infty$. Then for any given finite
		interval $(a, b)\subset\mathbb{R}$ with $a\geq0$ and any $N\in\mathbb{N}$, there exists a finite disjoint collection $\{\overline{B}_{FK_{n_i}}(x_i,\epsilon)\}$ such that $x_i\in Z$, $n_i\geq N$ and $\sum_ie^{-sn_i}\in(a,b)$.
	\end{lem}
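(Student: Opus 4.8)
The strategy is to reduce the packing hypothesis to ordinary packing at a single (large) scale $N$, and then to ``dial in'' a finite packing whose weighted count lands in $(a,b)$ by discarding balls one at a time, using that $s>0$ keeps each discarded term below $b-a$.

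First I would note that $\mathcal{P}_{FK}(T,d,Z,s,\epsilon)=\infty$ already forces $P_{FK}(T,d,Z,s,N,\epsilon)=\infty$ for every $N\in\mathbb{N}$: taking the trivial cover $Z_1=Z$, $Z_i=\emptyset$ $(i\ge 2)$ in the definition of $\mathcal{P}_{FK}$ gives $P_{FK}(T,d,Z,s,\epsilon)\ge\mathcal{P}_{FK}(T,d,Z,s,\epsilon)=\infty$, and since $N\mapsto P_{FK}(T,d,Z,s,N,\epsilon)$ is non-increasing with limit $P_{FK}(T,d,Z,s,\epsilon)$, every term of that sequence equals $\infty$. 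Now, using $s>0$ and $b-a>0$, I pick $N'\ge N$ with $e^{-sN'}<b-a$. Since the supremum defining $P_{FK}(T,d,Z,s,N',\epsilon)$ is infinite, there is a finite or countable pairwise disjoint family $\{\overline{B}_{FK_{n_i}}(x_i,\epsilon)\}$ with $x_i\in Z$, $n_i\ge N'$ and weighted sum exceeding $b$; truncating to a finite subfamily (which stays pairwise disjoint) I may assume it is a finite collection $\mathcal{F}=\{\overline{B}_{FK_{n_i}}(x_i,\epsilon)\}_{i=1}^m$ with $\sum_{i=1}^m e^{-sn_i}>b$ and all $n_i\ge N'\ge N$.

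Next comes the tuning step. Enumerate the balls and form the chain $\mathcal{F}=\mathcal{F}_0\supsetneq\mathcal{F}_1\supsetneq\cdots\supsetneq\mathcal{F}_m=\emptyset$, where $\mathcal{F}_{k}$ is obtained from $\mathcal{F}_{k-1}$ by deleting one ball; set $S_k=\sum_{i\in\mathcal{F}_k}e^{-sn_i}$. Then $S_0>b$, $S_m=0$, the $S_k$ are strictly decreasing in $k$, and each single-step decrease $S_{k-1}-S_k$ equals some $e^{-sn_j}\le e^{-sN'}<b-a$. Let $k^\ast$ be the least index with $S_{k^\ast}<b$; since $S_0>b$ we have $k^\ast\ge1$ and $S_{k^\ast-1}\ge b$, whence $S_{k^\ast}=S_{k^\ast-1}-e^{-sn_j}>b-(b-a)=a$. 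Thus $S_{k^\ast}\in(a,b)$; in particular $S_{k^\ast}>a\ge0$ forces $\mathcal{F}_{k^\ast}\ne\emptyset$, so $\mathcal{F}_{k^\ast}$ is the desired finite pairwise disjoint collection, with all $n_i\ge N$ and $\sum e^{-sn_i}\in(a,b)$.

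I do not expect a serious obstacle here: the content is the two observations that (i) $\mathcal{P}_{FK}=\infty$ passes down to $P_{FK}(\cdot,N,\cdot)=\infty$ for all $N$ via the trivial cover together with monotonicity in $N$, and (ii) with $s>0$ one may choose the scale $N'$ so large that deleting a single ball changes the weighted count by less than the length $b-a$ of the target interval, which is exactly what lets the discard process stop inside the open interval with a non-empty collection. The only care needed is the bookkeeping in the last paragraph --- that $k^\ast$ exists, that $S_{k^\ast-1}\ge b$, and that the hypothesis $a\ge 0$ keeps $\mathcal{F}_{k^\ast}$ non-empty.
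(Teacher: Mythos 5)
Your proof is correct and follows essentially the same route as the paper: choose $N'\ge N$ with $e^{-sN'}<b-a$, use $\mathcal{P}_{FK}(T,d,Z,s,\epsilon)=\infty$ (trivial cover plus monotonicity in $N$) to extract a finite disjoint packing with weighted sum exceeding $b$, then discard balls one at a time until the sum lands in $(a,b)$. Your bookkeeping for the stopping index and the non-emptiness from $a\ge 0$ just makes explicit what the paper states tersely.
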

	\begin{proof}
		Take $N_1>N$ large enough such that $e^{-sN_1}<b-a$. Since $\mathcal{P}_{FK}(T,d,Z,s,\epsilon)=\infty$, we have $\mathcal{P}_{FK}(T,d,Z,s,N_1,\epsilon)=\infty$. Thus, there is a finite disjoint collection $\{\overline{B}_{FK_{n_i}}(x_i,\epsilon)\}$ such that $x_i\in Z$, $n_i\geq N_1$ and $\sum_ie^{-sn_i}>b$. Since $e^{-sn_i}\leq e^{-sN_1}<b-a$, by discarding elements in this collection one by one until we can have $\sum_ie^{-sn_i}\in(a,b)$.
	\end{proof}

	\begin{lem}\label{lem-2}
		Let $Z \subset X$ and $s>0$. Then for any $ 0<\epsilon_1<\epsilon_2$,
		$$P_{FK}(T,d,\overline{Z},s, \epsilon_2)\le P_{FK}(T,d,Z,s, \epsilon_1).$$
	\end{lem}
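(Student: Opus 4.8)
The plan is to turn a packing of $\overline{Z}$ by closed Feldman--Katok $\epsilon_2$-balls into a packing of $Z$ by closed $\epsilon_1$-balls of the same orders, by pushing each centre a tiny bit into $Z$ and absorbing the displacement into the gap $\epsilon_2-\epsilon_1$. First I would reduce to a fixed scale $N$: since $P_{FK}(T,d,\cdot,s,\epsilon)=\lim_{N\to\infty}P_{FK}(T,d,\cdot,s,N,\epsilon)$, it is enough to prove $P_{FK}(T,d,\overline{Z},s,N,\epsilon_2)\le P_{FK}(T,d,Z,s,N,\epsilon_1)$ for every $N\in\N$ and then pass to the limit in $N$. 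Next, since all the weights $e^{-n_i s}$ are positive, $P_{FK}(T,d,\overline{Z},s,N,\epsilon_2)$ equals the supremum of $\sum_i e^{-n_i s}$ over \emph{finite} pairwise disjoint families $\{\overline{B}_{FK_{n_i}}(x_i,\epsilon_2)\}_{i=1}^{k}$ with $x_i\in\overline{Z}$ and $n_i\ge N$ (a countable family is the increasing limit of its finite subfamilies), so it suffices to bound $\sum_{i=1}^{k}e^{-n_i s}$ by $P_{FK}(T,d,Z,s,N,\epsilon_1)$ for one such finite family.

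Fix such a finite family. For each $i$, pick $z^{(i)}_m\in Z$ with $z^{(i)}_m\to x_i$ (in $d$) as $m\to\infty$; this is possible because $x_i\in\overline{Z}$. By the inequality $d_{FK_{n_i}}(u,v)\le (\overline{d}_{n_i}(u,v))^{1/2}\le (d_{n_i}(u,v))^{1/2}$ recorded in Section~2, together with the continuity of $T,T^2,\dots,T^{n_i-1}$, we get $d_{FK_{n_i}}(x_i,z^{(i)}_m)\to 0$, so there is $y_i\in Z$ with $d_{FK_{n_i}}(x_i,y_i)<\epsilon_2-\epsilon_1$ (take $y_i=x_i$ if $x_i\in Z$ already). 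Invoking the triangle inequality for $d_{FK_{n_i}}$, whenever $d_{FK_{n_i}}(y_i,z)\le\epsilon_1$ we have $d_{FK_{n_i}}(x_i,z)\le d_{FK_{n_i}}(x_i,y_i)+d_{FK_{n_i}}(y_i,z)<\epsilon_2$; hence
\[\overline{B}_{FK_{n_i}}(y_i,\epsilon_1)\subseteq\overline{B}_{FK_{n_i}}(x_i,\epsilon_2)\qquad(i=1,\dots,k).\]
Since the balls on the right are pairwise disjoint, so are the balls on the left, and $\{\overline{B}_{FK_{n_i}}(y_i,\epsilon_1)\}_{i=1}^{k}$ is a finite pairwise disjoint family with $y_i\in Z$ and $n_i\ge N$. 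Therefore $\sum_{i=1}^{k}e^{-n_i s}\le P_{FK}(T,d,Z,s,N,\epsilon_1)$; taking the supremum over all such finite families and then letting $N\to\infty$ yields the lemma.

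The only step that is not entirely mechanical, and the one I would single out as the crux, is the triangle inequality for the FK distances $d_{FK_n}$ — precisely where the order-preserving requirement on matches pays off. Given an $(n,\delta_1)$-match $\pi_1$ of $x,y$ and an $(n,\delta_2)$-match $\pi_2$ of $y,z$, the composition $\pi_2\circ\pi_1$ restricted to $\pi_1^{-1}\bigl(R(\pi_1)\cap D(\pi_2)\bigr)$ is again order-preserving, is an $(n,\delta_1+\delta_2)$-match of $x,z$, and has cardinality $|R(\pi_1)\cap D(\pi_2)|\ge |\pi_1|+|\pi_2|-n$; this gives $\bar{f}_{n,\delta_1+\delta_2}(x,z)\le \bar{f}_{n,\delta_1}(x,y)+\bar{f}_{n,\delta_2}(y,z)$, and passing to infima over $\delta_1,\delta_2$ yields $d_{FK_n}(x,z)\le d_{FK_n}(x,y)+d_{FK_n}(y,z)$ (see also \cite{KL,CL}). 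Everything else — the reduction to a single $N$, the reduction to finite families, and the continuity-based choice of the $y_i$ — is routine.
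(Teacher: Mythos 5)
Your proof is correct and follows essentially the same route as the paper's: replace each centre $x_i\in\overline{Z}$ by a nearby $y_i\in Z$ with $d_{FK_{n_i}}(x_i,y_i)<\epsilon_2-\epsilon_1$, use the triangle inequality for $d_{FK_{n_i}}$ to get $\overline{B}_{FK_{n_i}}(y_i,\epsilon_1)\subset\overline{B}_{FK_{n_i}}(x_i,\epsilon_2)$, and note that disjointness is inherited, then pass to the supremum and let $N\to\infty$. The only difference is that you supply two details the paper leaves implicit — that $d$-closeness forces $d_{FK_n}$-closeness (via $d_{FK_n}\le(\overline{d}_n)^{1/2}\le(d_n)^{1/2}$ and continuity of $T$), and a correct sketch of the FK triangle inequality — which is a welcome, not a divergent, addition.
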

	\begin{proof}
		Let  $N \in \mathbb{N}$ and $\{\overline{B}_{{FK}_{n_i}}(x_i,\epsilon_2)\}_{i \in I}$ be a family of pairwise disjoint balls with $x_i \in \overline{Z}$ and $n_i \ge N$ for any $i \in I$.
		
		If $x_i \in Z$ then let $y_i=x_i$ and we have $$\overline{B}_{{FK}_{n_i}}(y_i,\epsilon_1)=\overline{B}_{{FK}_{n_i}}(x_i,\epsilon_1)\subset \overline{B}_{{FK}_{n_i}}(x_i,\epsilon_2).$$
		If $x_i \in \overline{Z}\setminus Z$, then we can choose $y_i \in Z$ such that $d_{{FK}_{n_i}}(x_i,y_i)<\epsilon_2-\epsilon_1$. Then for any $z \in \overline{B}_{{FK}_{n_i}}(y_i, \epsilon_1)$, 
		$$d_{{FK}_{n_i}}(z,x_i) \le d_{{FK}_{n_i}}(z,y_i) +d_{{FK}_{n_i}}(y_i,x_i) \le \epsilon_1+\epsilon_2-\epsilon_1=\epsilon_2,$$
		hence we have
		$$\overline{B}_{{FK}_{n_i}}(y_i,\epsilon_1)\subset \overline{B}_{{FK}_{n_i}}(x_i,\epsilon_2).$$
		
		Since $\{\overline{B}_{{FK}_{n_i}}(x_i,\epsilon_2)\}_{i \in I}$ are pairwise disjoint, $\{\overline{B}_{{FK}_{n_i}}(y_i,\epsilon_1)\}_{i \in I}$ are pairwise disjoint with $y_i \in Z$ and $n_i \ge N$ for any $i \in I$. Hence
		$$P_{FK}(T,d,\overline{Z},N,s, \epsilon_2)\le P_{FK}(T,d,Z,N,s, \epsilon_1),$$ 
		which implies $$P_{FK}(T,d,\overline{Z},s, \epsilon_2)\le P_{FK}(T,d,Z,s, \epsilon_1).$$
		
	\end{proof}

	Recall that a set in a metric space is called {\it  analytic} if it is a continuous image of $\mathcal{N}$, where $\mathcal{N}$ is the set of infinite sequences of natural numbers. It is known that every Borel set is analytic(see Chapter 11 of \cite{J-settheory}).
	
	\begin{lem}\label{lem-packing-lower}
		Let $\epsilon>0$, $Z\subset X$ be analytic with $\mathcal{P}_{FK}(T,d,Z,\epsilon)>0$. For any $0<s<\mathcal{P}_{FK}(T,d,Z,\epsilon)$, there exists a compact set $K\subset Z$ and $\mu\in M(K)$ such that $\overline{h}_{\mu}^{FK}(T,\epsilon)\geq s$.
	\end{lem}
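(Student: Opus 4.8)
The plan is to adapt the standard Cantor--Moran construction for packing‑type variational principles (in the spirit of Feng and Huang \cite{FH}) to the FK metric, using Lemma \ref{flj} as the combinatorial engine and the analyticity of $Z$ to run the construction along a tree. Since $0<s<\mathcal{P}_{FK}(T,d,Z,\epsilon)$, the description of the critical value gives $\mathcal{P}_{FK}(T,d,Z,s,\epsilon)=\infty$. The first step is a reduction: using a countable base of $X$ and a transfinite exhaustion --- discarding at each stage open pieces on which the restricted $\mathcal{P}_{FK}(\cdot,s,\epsilon)$ vanishes --- I would produce an analytic set $Z^{*}\subseteq Z$ (the intersection of $Z$ with the complement of an open set, hence still analytic) such that $\mathcal{P}_{FK}(T,d,Z^{*},s,\epsilon)=\infty$ while $\mathcal{P}_{FK}(T,d,Z^{*}\cap U,s,\epsilon)=\infty$ for every open $U$ meeting $Z^{*}$. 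I would then fix a continuous surjection $\phi\colon\mathcal{N}\to Z^{*}$ from the Baire space and, reparametrizing along a compatible metric, arrange that $\diam_{d}\phi(\mathcal{N}_{\sigma})\to0$ as the length of the word $\sigma$ increases, where $\mathcal{N}_{\sigma}$ is the cylinder of $\sigma$.

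Next I would build, by induction on $k$, finite families $\mathcal{W}_{k}$ of pairwise disjoint closed FK‑balls $\overline{B}_{FK_{t_{k}}}(x,\epsilon)$ (all of one common time $t_{k}$, with $t_{1}<t_{2}<\cdots$ increasing as fast as needed), together with positive weights $w$, so that: each ball of $\mathcal{W}_{k+1}$ sits inside a unique parent ball of $\mathcal{W}_{k}$; for each parent, $\sum_{\text{children}}e^{-s t_{k+1}}\in(1-\eta_{k},1+\eta_{k})$ with $\eta_{k}$ summable, and the weights renormalize, $w(\text{parent})=\sum_{\text{children}}w(\text{child})$ with $w(\text{child})=\frac{w(\text{parent})\,e^{-st_{k+1}}}{\sum_{\text{children}}e^{-st_{k+1}}}$; the centre of every ball lies in $\phi$ of a ``good'' word, the good words along each branch being nested and of increasing length; and each ball is moreover contained in a $d_{M_{k}}$‑ball of radius $\rho_{k}$, with $M_{k}\to\infty$ and $\rho_{k}\to0$. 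The existence of the children is exactly Lemma \ref{flj} (after a pigeonhole step upgrading its varying‑time conclusion to a single time), applied to the set obtained by intersecting $Z^{*}$ with the interior of the parent ball, with $\phi$ of the good cylinder, and with the prescribed $d_{M_{k}}$‑ball; the invariant that this localized set retains infinite $\mathcal{P}_{FK}(\cdot,s,\epsilon)$ is maintained by repeatedly using the hereditary property of Step 1 together with the countable subadditivity of $\mathcal{P}_{FK}$ (which, since $Z^{*}=\phi(\mathcal{N})=\bigcup_{\sigma'}\phi(\mathcal{N}_{\sigma'})$ over one‑letter extensions, lets one always pass to a good child word). Then $K:=\bigcap_{k}\bigcup_{B\in\mathcal{W}_{k}}B$ is a nonempty compact set; the $d_{M_{k}}$‑ball constraints force each descending branch to pin down a single point, which by the $\phi$‑constraints equals $\phi(\tau)$ for the infinite word $\tau$ swept out along that branch, so $K$ is a genuine compact subset of $Z$; and the weights assemble into a Borel probability measure $\mu$ on $K$ with $\mu(B)=w(B)$ for $B\in\mathcal{W}_{k}$.

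For the lower bound on the entropy, the fast growth of $t_{k}$ makes $-\frac{1}{t_{k}}\log w(B)\to s$ for any nested chain $B\in\mathcal{W}_{k}$, with $w(B)\le e^{-st_{k}}$. Fix $x\in K$ and let $B_{k}$ be the level‑$k$ ball containing $x$. The FK‑ball $B_{FK_{t_{k}}}(x,\epsilon)$ meets a level‑$k$ ball $\overline{B}_{FK_{t_{k}}}(x',\epsilon)$ only if $d_{FK_{t_{k}}}(x',x)<2\epsilon$, so it meets at most $c_{k}$ of them, where $c_{k}$ is a packing number of a small ball in the metric $d_{FK_{t_{k}}}$; hence $\mu(B_{FK_{t_{k}}}(x,\epsilon))\le c_{k}\,e^{-st_{k}}$, and provided $\log c_{k}=o(t_{k})$ we get $-\frac{1}{t_{k}}\log\mu(B_{FK_{t_{k}}}(x,\epsilon))\ge s-o(1)$. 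Therefore $\overline{h}_{\mu}^{FK}(T,x,\epsilon)=\limsup_{n}-\frac{1}{n}\log\mu(B_{FK_{n}}(x,\epsilon))\ge s$ for every $x\in K$, and since $\overline{h}_{\mu}^{FK}(T,\epsilon)=\int\overline{h}_{\mu}^{FK}(T,x,\epsilon)\,d\mu$ with a non‑negative integrand, $\overline{h}_{\mu}^{FK}(T,\epsilon)\ge s$, as claimed.

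\textbf{Main obstacle.} The crux is the estimate $\log c_{k}=o(t_{k})$: a priori a $d_{FK_{n}}$‑ball of radius $2\epsilon$ could meet exponentially‑in‑$n$ many pairwise disjoint closed $\epsilon$‑balls, which would only yield $\overline{h}_{\mu}^{FK}(T,\epsilon)\ge s-C$. One must therefore either exploit the ``largeness'' of FK‑balls (each being a union of orbits matchable up to a small time shift, which should force the local packing numbers of the metrics $d_{FK_{n}}$ to grow subexponentially in $n$) or engineer the required extra separation of the centres of $\mathcal{W}_{k}$ directly into the inductive choice of children, using that for a $\limsup$ only infinitely many good scales are needed. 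The other delicate, but standard, point is the reduction of Step 1, where one must discard only open pieces of zero $\mathcal{P}_{FK}$‑mass so that the removed part is $\mathcal{P}_{FK}$‑null rather than merely a countable union of sets of finite $\mathcal{P}_{FK}$‑mass.
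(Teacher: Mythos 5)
Your overall strategy is the Feng--Huang tree construction that the paper itself follows, but as written it has a genuine gap exactly at the point you flag as the ``main obstacle'', and that gap is fatal to the argument in this form. You estimate $\mu(B_{FK_{t_k}}(x,\epsilon))$ by counting how many of the pairwise disjoint level-$k$ balls $\overline{B}_{FK_{t_k}}(x',\epsilon)$ the ball $B_{FK_{t_k}}(x,\epsilon)$ can meet, and you need this multiplicity $c_k$ to satisfy $\log c_k=o(t_k)$. There is no reason for this: the number of disjoint $\epsilon$-balls inside a $2\epsilon$-ball in the metric $d_{FK_{t_k}}$ can grow exponentially in $t_k$ (this is the usual scale-$\epsilon$ versus scale-$2\epsilon$ entropy discrepancy, which persists for FK metrics), so your bound only yields $\overline{h}_{\mu}^{FK}(T,\epsilon)\ge s-C$, not $\ge s$. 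The paper removes the multiplicity factor altogether by a stronger separation built into the induction: at level $i$ it introduces auxiliary radii $\gamma_i$ in the \emph{ambient} metric $d$ and requires (condition (C-3), inequality \eqref{C-3-1}) that for every $x\in K_i$ and every $z\in B(x,\gamma_i)$ the ball $\overline{B}_{FK_{m_i(x)}}(z,\epsilon)$ is disjoint from $\bigcup_{y\in K_i\setminus\{x\}}\overline{B}(y,\gamma_i)$; this is achievable because the finitely many closed FK-balls produced by Lemma \ref{flj} are disjoint, hence at positive distance, and it is stable under perturbing the centres by $\gamma_{i+1}<\gamma_i/4$ at later stages. Since the limit set $K$ lies in $\bigcup_{y\in K_i}\overline{B}(y,\gamma_i/2)$, any FK-ball $\overline{B}_{FK_{m_i(x)}}(z,\epsilon)$ with $z\in K$ near $x$ then charges only the single branch through $x$, giving $\widetilde{\mu}(\overline{B}_{FK_{m_i(x)}}(z,\epsilon))\le C e^{-m_i(x)s}$ with a universal constant $C$, and hence $\overline{h}_{\mu}^{FK}(T,z,\epsilon)\ge s$ pointwise. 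If you want to repair your proof you should import this $\gamma_i$-separation device rather than hope for a subexponential packing bound; note also that the paper does not need a common time per level, so your pigeonhole ``upgrade'' of Lemma \ref{flj} (which would destroy the control $\sum_i e^{-sn_i}\in(a,b)$) is unnecessary.

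Two secondary points. First, your Step-1 reduction claims that after deleting the open pieces with vanishing packing quantity at exponent $s$, every open $U$ meeting $Z^*$ has $\mathcal{P}_{FK}(T,d,Z^*\cap U,s,\epsilon)=\infty$; deletion at exponent $s$ only gives positivity, not infinity. The paper's fix is to fix an intermediate $t\in(s,\mathcal{P}_{FK}(T,d,Z,\epsilon))$, run the hereditary localization at exponent $t$ (positivity suffices there), and then use $t>s$ to convert positivity at $t$ into $\mathcal{P}_{FK}(\cdot,s,\epsilon)=\infty$ before invoking Lemma \ref{flj}; you should do the same, and no transfinite exhaustion is needed (one pass with a countable base and countable subadditivity suffices). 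Second, your identification of $K$ as a subset of $Z$ via shrinking cylinders of a reparametrized surjection is plausible but vaguer than the paper's argument, which keeps the centres inside the sets $Z_{n_1,\dots,n_p}=\Psi(\Gamma_{n_1,\dots,n_p})$ and then shows $\bigcap_p\overline{Z_{n_1\dots n_p}}\subset Z$ by a Cantor diagonal argument using continuity of $\Psi$; that route avoids having to control diameters of images of cylinders.
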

	\begin{proof}
		In the proof we will follow Feng and Huang's method in \cite{FH}.
		
		By the definition of  analytic set, there exists a continuous surjective map $\Psi :\mathcal{N} \rightarrow Z$. Let  $\Gamma_{n_{1},n_{2},\dots,n_{p}}=\{(m_{1},m_{2},\dots) \in \mathcal{N}:m_{1} \leq n_{1},m_{2} \leq n_{2},\dots,m_{p} \leq n_{p}\}$ be an element in $\mathcal{N}$,  we denote $Z_{n_{1},n_{2},\dots,n_{p}}=\Psi(\Gamma_{n_{1},n_{2},\dots,n_{p}})$.
		
		Choose $t \in (s,\mathcal{P}_{FK}(T,d,Z,\epsilon))$. 
		Next, we will inductively construct the following sequences:.
		\begin{itemize}
			\item[(S-1)] A sequence of finite set$\{K_{i}\}_{i=1}^{\infty}$ with $K_i \subset Z$.
			\item[(S-2)]A sequence of finite measures $(\mu_{i})_{i=1}^{\infty}$ such that for each $i$, $\mu_i$ is supported on $K_i$.
			\item[(S-3)]  A sequence of integers $\{n_i\}_{i=1}^{\infty}$ and a sequence of positive numbers $\{\gamma_{i}\}_{i=1}^{\infty}$.
			\item[(S-4)] A sequence  integer-valued maps $\{ m_{i}: K_{i} \rightarrow \mathbb{N}\}_{i=1}^{\infty}$.
		\end{itemize}
		
		And these sequences will satifiy the following conditions.
		
		\begin{itemize}
			\item[(C-1)] For each $i$, elements in $\mathcal{F}_i=\{\overline{B}(x,\gamma_i)\}_{x \in K_i}$ are pairwise disjoint. And each element in $\mathcal{F}_{i+1}$ is a subset of 
			$\overline{B}(x,\frac{\gamma_i}{2})$ for some $x \in K_i$.
			\item[(C-2)] For each $i$, $K_i \subset Z_{n_1,n_2,\cdots,n_i}$ and 
			$\mu_i=\sum_{y \in K_i}e^{-m_i(y)s}\delta_y$. And $1<\mu_1(K_1)<2$.
			\item[(C-3)] For each $x \in K_i$ and $z \in B(x,\gamma_i)$
			\begin{equation}\label{C-3-1}
			\overline{B}_{FK_{m_i(x)}}(z,\epsilon) \cap \bigcup_{y \in K_i \setminus \{x\}}\overline{B}(y,\gamma_i)=\emptyset
			\end{equation}
			and
			\begin{equation}\label{C-3-2}
			\mu_{i}(\overline{B}(x,\gamma_i))<\sum_{y \in E_{i+1}(x)}e^{-m_{i+1}(y)s}<(1+2^{-i-1})\mu_i(\overline{B}(x,\gamma_i))
			\end{equation}
			where $E_{i+1}(x)=\overline{B}(x,\frac{\gamma_i}{4})\cap K_{i+1}$.
		\end{itemize}
		
		\medskip
		
		Assume the sequences $K_{i}$, $\mu_{i}$, $m_{i}(\cdot)$, $n_{i}$ and $\gamma_{i}$ have been constructed.	Next we will construct a compact set $K\subseteq Z$ and a measure $\mu \in M(K)$ and show that $\overline{h}_{\mu}^{FK}(T,\epsilon)\geq s$.

		By \eqref{C-3-2}, for each $V_i \in \mathcal{F}_i$,
		$$\mu_i(V_i)\le \mu_{i+1}(V_i)=\sum_{V\in \mathcal{F}_{i+1},V\subset V_i}\mu_{i+1}(V)\le (1+2^{-(i+1)})\mu_i(V_i).$$
		Using the above inequalities repeatedly, we have for any $j>i$ and $V_i \in \mathcal{F}_i$,
		\begin{equation}\label{ineq-mui}
		\mu_{i}(V_{i}) \leq \mu_{j}(V_{i}) \leq \prod_{n=i+1}^{j}(1+2^{-n})\mu_{i}(V_{i}) \leq C\mu_{i}(V_{i}), 
		\end{equation}
		where $C:=\prod_{n=1}^{\infty}(1+2^{-n})<\infty$.
		
		\bigskip
		
		Let $\widetilde{\mu}$ be the limit point of $\mu_{i}$ in weak* topology. We denote
		$$K=\bigcap_{n=1}^{\infty}\overline{\bigcup_{i \geq n}K_{i}},$$
		then $\widetilde{\mu}$ is supported on $K$, moreover
		$$K=\bigcap_{n=1}^{\infty}\overline{\bigcup_{i \geq n}K_{i}} \subset \bigcap_{p=1}^{\infty}\overline{Z_{n_{1}n_{2}\dots n_{p}}}.$$
		Since $\Psi$ is continuous, we can obtain $$\bigcap_{p=1}^{\infty}\overline{Z_{n_{1},n_{2},\dots, n_{p}}}=\bigcap_{p=1}^{\infty}Z_{n_{1}n_{2}\dots n_{p}}\subset Z$$
		by applying Cantor's diagonal argument.
		In fact, let $\omega_ \in \bigcap_{p=1}^{\infty}\overline{Z_{n_{1},n_{2},\dots, n_{p}}}$. 
		For $p=1$, there exist $\{\omega_{k}^{1}\}_{k=1}^{\infty}\subset Z_{n_{1}}$
		such that $\omega_{k}^{1} \rightarrow \omega$, where $$\omega_{k}^{1}=\Psi((m_{1}^{k,1},m_{2}^{k,1},\dots,m_{p}^{k,1},\dots))$$ with 
		$m_{1}^{k,1} \le n_1$. 
		Then there exists $N_1 \in \mathbb{N}$ and $m_1 \le n_1$ such that when $k \ge N_1$, $m_{1}^{k,1}=m_1$. 
		For $p=2$, there exist $\{\omega_{k}^{2}\}_{k=1}^{\infty}\subset Z_{n_{1},n_{2}}$
		such that $\omega_{k}^{2} \rightarrow \omega$, where $$\omega_{k}^{2}=\Psi((m_1,m_{2}^{k,2},\dots,m_{p}^{k,2},\dots))$$ with 
		$m_{2}^{k,2} \le n_2$. 
		Then there exists $N_2>N_1 \in \mathbb{N}$ and $m_2 \le n_2$ such that when $k \ge N_2$, $m_{2}^{k,2}=m_2$.  Inductively for $p>2$,
		there exist $\{\omega_{k}^{p}\}_{k=1}^{\infty}\subset Z_{n_{1},n_{2},\dots, n_{p}}$ such that $\omega_{k}^{p} \rightarrow \omega$. 
		where $$\omega_{k}^{p}=\Psi((m_1,m_2,\dots,m_{p-1},m_{p}^{k,p},m_{p+1}^{k,p},\dots))$$ with 
		$ m_{p}^{k,p} \le n_p$. Then there exists $N_p>N_{p-1} \in \mathbb{N}$ and $m_p \le n_p$ such that when $k \ge N_p$, $m_{p}^{k,p}=m_p$. Let $\vec{m}=(m_1,m_2,m_3,\dots)$ then 
		$$\omega=\Psi(\vec{m})\in \bigcap_{p=1}^{\infty}Z_{n_{1}n_{2}\dots n_{p}}.$$
		Hence $K$ is a compact subset of $Z$.
		
		By \eqref{ineq-mui}, for any $x \in K_i$
		$$e^{-m_{i}(x)s}=\mu_{i}(\overline{B}(x,\gamma_{i})) \leq \widetilde{\mu}(\overline{B}(x,\gamma_{i}))\le C\mu_{i}(\overline{B}(x,\gamma_{i}))=Ce^{-m_{i}(x)s}.$$
		In particular, 
		$$1 \leq \sum_{x \in K_{1}}\mu_{1}(B(x,\gamma_{1})) \leq \widetilde{\mu}(K) \leq \sum_{x \in K_{1}}C\mu_{1}(B(x,\gamma_{1})) \leq 2C.$$
		Since $K \subseteq \bigcup_{x \in K_{i}}\overline{B}(x,\frac{\gamma_{i}}{2})$. then by \eqref{C-3-1}, for any $x \in K_{i}$, $z \in \overline{B}(x,\gamma_{i})$, we have
		$$\widetilde{\mu}(\overline{B}_{FK_{m_{i}(x)}}(z,\epsilon)) \leq \widetilde{\mu}(\overline{B}(x,\frac{\gamma_{i}}{2})) \leq Ce^{-m_{i}(x)s}.$$
		For each $z \in K$ and $i \in \mathbb{N}$, there exists some $x \in K_{i}$ such that  $z \in \overline{B}(x,\frac{\gamma_{i}}{2})$, hence
		$$\widetilde{\mu}(B_{FK_{m_{i}(x)}}(z,\epsilon)) \leq Ce^{-m_{i}(x)s}.$$
		
		Let $\mu=\frac{\widetilde{\mu}}{\widetilde{\mu}(K)}$, so $\mu \in M(K)$. Then  for each $z \in K$, there exist sequence $k_{i} \rightarrow \infty$, such that $\mu (B_{FK_{k_{i}}}(z,\epsilon)) \leq \frac{Ce^{-k_{i}s}}{\widetilde{\mu}(K)}$. Thus we have
		\begin{eqnarray*}
			\overline{h}_{\mu}^{FK}(T, \epsilon) &=& \int_{K}\overline{h}_{\mu}^{FK}(T,x,\epsilon)d\mu(x)\\
			&=& \int_{K}\limsup_{n \rightarrow \infty}-\frac{1}{n}\log(\mu(B_{FK_n}(x,\epsilon)))d\mu(x)\\
			&\geq& \int_{K}\limsup_{k_{i} \rightarrow \infty}-\frac{1}{k_i}\log \left(\frac{Ce^{-k_{i}s}}{\widetilde{\mu}(K)}\right)d\mu(x)\\
			&=& \int_{K}\limsup_{k_{i} \rightarrow \infty}-\frac{1}{k_{i}}(\log C+\log e^{-k_{i}s})d\mu(x)\\
			&=&s\mu(K)\\
			&=&s
		\end{eqnarray*}

		Now we start the construction, the construction is divided by three steps.

		{\noindent {\bf Step 1:}} Construction of $K_{1}$, $\mu_{1}$, $m_{1}(\cdot)$, $n_{1}$ and $\gamma_{1}$.
		
		Recall that we choose $t<\mathcal{P}_{FK}(T,d,Z,\epsilon)$. So $\mathcal{P}_{FK}(T,d,Z,t,\epsilon)=\infty$. Let
		$$H_{1}=\bigcup \{G \subset X: G \text{ is an open set}, \mathcal{P}_{FK}(T,d,Z \cap G,t,\epsilon)=0\}.$$
		By the separability of $X$, $H_1$ is a countable union of the open sets $G$'s. So
		%$$\mathcal{P}_{\epsilon}^{t}(Z \cap H_{1})=\mathcal{P}_{\epsilon}^{t}(Z \cap (\cup %G_{i}))=\mathcal{P}_{\epsilon}^{t}(\cup (Z \cap G_{i})),$$
		$$Z \cap H_{1}=Z \cap (\cup_{i} G_{i})=\cup_i (Z \cap G_{i}),$$
		we obtain 
		$\mathcal{P}_{FK}(T,d,Z\cap H_1,t,\epsilon)\le \sum_{i}\mathcal{P}_{FK}(T,d,Z \cap G_i,t,\epsilon)=0$.

		Let 
		$Z'=Z \setminus H_{1}=Z \cap (X \setminus H_{1}).$
		For every open set $G \subset X$, either $Z' \cap G =\emptyset$ or $\mathcal{P}_{FK}(T,d,Z' \cap G,t,\epsilon)>0$. To see the conclusion, assume that $G$ is an open set with $\mathcal{P}_{FK}(T,d,Z' \cap G,t,\epsilon)=0$, notice that
		\begin{eqnarray*}
			Z\cap G
			&=&（(Z\cap(X\setminus H_1))\cup (Z\cap H_1))\cap G 
			\subset ( Z'\cap G) \cup (Z \cap H_1)
		\end{eqnarray*}
		then $\mathcal{P}_{FK}(T,d,Z \cap G,t,\epsilon) \leq \mathcal{P}_{FK}(T,d,Z' \cap G,t,\epsilon)  + \mathcal{P}_{FK}(T,d,Z \cap H_1,t,\epsilon) =0$, it implies $G \subset H_{1}$ and then $Z' \cap G =\emptyset$.
		
		Note that $$\mathcal{P}_{FK}(T,d,Z,t,\epsilon)  \leq \mathcal{P}_{FK}(T,d,Z',t,\epsilon)  +\mathcal{P}_{FK}(T,d,Z'\cap H_1,t,\epsilon) =\mathcal{P}_{FK}(T,d,Z',t,\epsilon), $$ hence $\mathcal{P}_{FK}(T,d,Z',t,\epsilon)=\mathcal{P}_{FK}(T,d,Z,t,\epsilon)=+\infty$. Since $t>s$, by definition we have  $\mathcal{P}_{FK}(T,d,Z',s,\epsilon)=+\infty$.
		
		By Lemma \ref{flj}, we can find finite set $K_{1} \subseteq Z'$ and integer-valued map $m_{1}$ on $K_{1}$ such that the collection $\{\overline{B}_{FK{m_{1}(x)}}(x,\epsilon)\}_{x \in K_{1}}$ is disjoint and $$\sum_{x \in K_{1}}e^{-m_{1}(x)s} \in (1,2).$$
		
		Let $$\mu_{1}=\sum_{x \in K_{1}}e^{-m_{1}(x)s}\delta_{x},$$ 
		where $\delta_{x}$ is the Dirac measure at $x$. By Lemma \ref{d-fk-mean}, we can choose $\gamma_{1}>0$ small enough such that for any function $z:K_{1} \rightarrow X$ with $d(x,z(x))<\gamma_{1}$, we have for every $x \in K_{1}$,
		\begin{equation}\label{step1-disjoint}
		(\overline{B}(z(x),\gamma_{1})) \cup (\overline{B}_{FK_{m_{1}(x)}}(z(x),\epsilon)) \cap \left(\bigcup_{y \in K_{1} \setminus \{x\}}\overline{B}(z(y),\gamma_{1}) \cup \overline{B}_{FK_{{m_{1}(y)}}}(z(y),\epsilon)\right)=\emptyset
		\end{equation}
		Since $K_{1} \subseteq Z'$, for each $x \in K_{1}$,
		$$\mathcal{P}_{FK}(T,d,Z \cap B(x,\frac{\gamma_{1}}{4}),t,\epsilon) \geq \mathcal{P}_{FK}(T,d,Z' \cap B(x,\frac{\gamma_{1}}{4}),t,\epsilon) >0.$$ Hence we can find a large $n_{1} \in \mathbb{N}$  such that $K_{1} \subseteq Z_{n_{1}}$, and
		$$\mathcal{P}_{FK}(T,d,Z_{n_1} \cap B(x,\frac{\gamma_{1}}{4}),t,\epsilon)>0$$
		for each $x \in K_{1}$.
		
		\medskip
		
		{\noindent \bf{Step 2:}} Construction of $K_{2}$, $\mu_{2}$, $m_{2}(\cdot)$, $n_{2}$ and $\gamma_{2}$.
		
		By \eqref{step1-disjoint},  we know $\{\overline{B}(x,\gamma_{1})_{x \in K_{1}}\}$ are pairwise disjoint. Since for each each $x \in K_{1}$, 	$$\mathcal{P}_{FK}(T,d,Z_{n_1} \cap B(x,\frac{\gamma_{1}}{4}),t,\epsilon)>0,$$
		similar to what we did in step 1, 
		we can choose a finite set
		$$E_{2}(x) \subseteq Z_{n_{1}} \cap B \left(x,\frac{\gamma_{1}}{4} \right),$$
		and a integer-valued map
		$$m_{2}:E_{2}(x) \rightarrow \mathbb{N} \bigcap [\max\{m_{1}(y):y \in K_{1}\},\infty)$$
		such that
		
		(2-$a$)$\mathcal{P}_{FK}(T,d,Z_{n_{1}} \cap G,t, \epsilon)>0$, where $G$ is a open set with $G \cap E_{2}(x) \neq \emptyset$;
		
		(2-$b$)The elements in $\{\overline{B}_{FK_{m_{2}(y)}}(y,\epsilon)\}_{y \in E_{2}(x)}$ are disjoint, and
		$$\mu_{1}(\{x\})=e^{-m_1(x)s}<\sum_{y \in E_{2}(X)}e^{-m_{2}(y)s}<(1+2^{-2})\mu_{1}(\{x\}).$$
		
		More precisely, we fix $x \in K_{1}$ and denote $F_{1,x}=Z_{n_{1}} \cap B\left(x,\frac{\gamma_{1}}{4}\right)$. Let
		$$H_{2,x}:= \bigcup \{G \subseteq X :G \text{ is an open set},\mathcal{P}_{FK}(T,d, F_{1,x} \cap G,t,\epsilon)=0\}.$$
		Let $F_{1,x}'=F_{1,x} \setminus H_{2,x}$, then similarly as in step 1, we can show $$\mathcal{P}_{FK}(T,d,F_{1,x}',t, \epsilon)=\mathcal{P}_{FK}(T,d,F_{1,x},t, \epsilon)>0.$$
		Moreover, $\mathcal{P}_{FK}(T,d, F_{1,x}' \cap G,t, \epsilon)>0$ for any open set $G$ with $F_{1,x}' \cap G \neq \emptyset$. 
		
		Recall that $s<t$,  then 
		$\mathcal{P}_{FK}(T,d, F_{1,x}',t, \epsilon)=+\infty$. By Lemma \ref{flj}, we can find a finite set $E_{2}(x) \subseteq F_{1,x}'$ and a integer-valued map 
		$$m_{2}:E_{2}(x) \rightarrow \mathbb{N} \bigcap [\max\{m_{1}(y):y \in K_{1}\},\infty)$$ 
		such that (2-$b$) holds. 
		Notice that $E_2(x) \subset F'_{1,x}$, if $G$ is an open set with $G\cap E_2(x) \neq \emptyset$, then $G\cap F'_{1,x}\neq \emptyset$, hence
		$$\mathcal{P}_{FK}(T,d, Z_{n_1} \cap G,t,\epsilon) \ge \mathcal{P}_{FK}(T,d, F_{1,x}' \cap G,t,\epsilon)>0,$$
		so (2-$a$) holds.
		
		Since the balls in  $\{\overline{B}(x,\gamma_{1})_{x \in K_{1}}\}$ are pairwise disjoint, for any $x,x' \in K_{1}$ with $x \neq x'$, we have $E_{2}(x) \cap E_{2}(x')=\emptyset$. We can construct a finite set  $K_2$ with
		$$K_{2}=\bigcup_{x \in K_{1}}E_{2}(x),$$ 
		and let $\mu_{2}=\sum_{y \in K_{2}}e^{-m_{2}(y)s}\delta_{y}$. Obviously, the elements in $\{\overline{B}_{FK_{m_{2}(y)}}(y,\epsilon)\}_{y \in K_{2}}$ are disjoint by \eqref{step1-disjoint} and (2-$b$). Hence, we can pick $0<\gamma_{2}<\frac{\gamma_{1}}{4}$ small enough, such that for any function $z:K_{2} \rightarrow X$ with $d(x,z(x))<\gamma_{2}$, we have for every $x \in K_{2}$,
		\begin{equation}
		(\overline{B}(z(x),\gamma_{2})) \cup (\overline{B}_{FK_{m_{2}(x)}}(z(x),\epsilon)) 
		\cap (\bigcup_{y \in K_{2} \setminus \{x\}}\overline{B}(z(y),\gamma_{2}) \cup \overline{B}_{FK_{m_{2}(y)}}(z(y),\epsilon))=\emptyset
		\end{equation}
		And we can find a large enough $n_{2} \in \mathbb{N}$  such that $K_{2} \subseteq Z_{n_{1}n_{2}}$, and 
		$$\mathcal{P}_{FK}(T,d,Z_{n_{1}n_{2}} \cap B(x,\frac{\gamma_{2}}{4}),t,\epsilon)>0$$ for each $x \in K_{2}$.
		
		\medskip
		
		{\noindent \bf{Step 3:}} Assume for $i=1,2,...,p$, $K_{i}$, $\mu_{i}$, $m_{i}(\cdot)$, $n_{i}$ and $\gamma_{i}$ have been constructed. Next we will construct
		$K_{p+1}$, $\mu_{p+1}$, $m_{p+1}(\cdot)$, $n_{p+1}$ and $\gamma_{p+1}$.
		
		We have the following conclusions. For any function $z:K_{p} \rightarrow X$ with $d(x,z(x))<\gamma_{p}$, we have for every $x \in K_{p}$,
		\begin{equation}\label{disjoint-p}
		(\overline{B}(z(x),\gamma_{p})) \cup (\overline{B}_{FK_{m_{p}(x)}}(z(x),\epsilon)) \cap \left(\bigcup_{y \in K_{p} \setminus \{x\}}\overline{B}(z(y),\gamma_{p}) \cup \overline{B}_{FK_{m_{p}(y)}}(z(y),\epsilon)\right)=\emptyset
		\end{equation}
		and $\mathcal{P}_{FK}(T,d,Z_{n_{1},n_{2},\dots,n_{p}} \cap B(x,\frac{\gamma_{p}}{4}),t,\epsilon)>0$ for each $x \in K_{p}$, $K_{p} \subseteq Z_{n_{1},n_{2},\dots,n_{p}}$.

		Note that the balls in $\{\overline{B}(x,\gamma_{p})\}_{x \in K_{p}}$ are pairwise disjoint. For each $x \in K_p$, since $\mathcal{P}_{FK}(T,d,Z_{n_{1},n_{2},\dots,n_{p}} \cap B(x,\frac{\gamma_{p}}{4}),t,\epsilon)>0$, similarly as in step 2, we can construct a finite set
		$$E_{p+1}(x) \subseteq Z_{n_{1},n_{2},\dots,n_{p}} \cap B \left(x,\frac{\gamma_{p}}{4} \right),$$
		and a integer-valued map
		$$m_{p+1}:E_{p+1}(x) \rightarrow \mathbb{N} \bigcap [\max\{m_{p}(y):y \in K_{p}\},\infty)$$
		such that
		
		(3-$a$)$\mathcal{P}_{FK}(Z_{n_{1},n_{2},\dots,n_{p}} \cap G, t, \epsilon)>0$, where $G$ is a open set with $G \cap E_{p+1}(x) \neq \emptyset$;
		
		(3-$b$)The elements in $\{\overline{B}_{FK_{m_{p+1}(y)}}(y,\epsilon)\}_{y \in E_{p+1}(x)}$ are disjoint, and
		$$\mu_{p}(\{x\})<\sum_{y \in E_{p+1}(x)}e^{-m_{p+1}(y)s}<(1+2^{-p-1})\mu_{p}(\{x\}).$$

		By \eqref{disjoint-p}, the balls in  $\{\overline{B}(x,\gamma_{p})\}_{x \in K_{p}}$ are disjoint, hence for any $x,x' \in K_{p}$ with $x \neq x'$, $E_{p+1}(x) \cap E_{p+1}(x')=\emptyset$.
		We denote $K_{p+1}=\bigcup_{x \in K_{p}}E_{p+1}(x),$
		and let $\mu_{p+1}=\sum_{y \in K_{p+1}}e^{-m_{p+1}(y)s}\delta_{y}$. 
		
		Note that the elements in $\{\overline{B}_{FK_{m_{p+1}(y)}}(y,\epsilon)\}_{y \in K_{p+1}}$ are disjoint by \eqref{disjoint-p} and  (3-$b$). Hence we can pick $0<\gamma_{p+1}<\frac{\gamma_{p}}{4}$ small enough, such that for any function $z:K_{p+1} \rightarrow X$ with $d(x,z(x))<\gamma_{p+1}$, we have for every $x \in K_{p+1}$,
		\begin{eqnarray*}
			(\overline{B}(z(x),\gamma_{p+1}) &\cup& (\overline{B}_{FK_{m_{p+1}(x)}}(z(x),\epsilon))\\
			&\cap& \left(\bigcup_{y \in K_{p+1} \setminus \{x\}}\overline{B}(z(y),\gamma_{p+1}) \cup \overline{B}_{FK_{m_{p+1}(y)}}(z(y),\epsilon)\right)=\emptyset
		\end{eqnarray*}
		For such $\gamma_{p+1}$, we can find a large enough $n_{p+1} \in \mathbb{N}$  such that $K_{p+1} \subseteq Z_{n_{1},n_{2},\dots,n_{p+1}}$, and   $\mathcal{P}_{FK} (Z_{n_{1},n_{2},\dots,n_{p+1}} \cap B(x,\frac{\gamma_{p+1}}{4}),t,\epsilon)>0$ for each $x \in K_{p+1}$.

		Now we have finished the construction.

	\end{proof}

	Using the lemma above, we can begin to prove Theorem \ref{thm-packing}.
	\begin{proof}

		Let $0<\epsilon<1$. We assume that $\mathcal{P}_{FK}(T,K,d,\epsilon)>0$. For any $0<s<\mathcal{P}_{FK}(T,K,d,\epsilon)$, by Lemma \ref{lem-packing-lower}, there is a compact set $K'\subset K$ and $\mu\in M(X)$ with $\mu(K')=1$ (hence  $\mu(K)=1$) such that  $\overline{h}_{\mu}^{FK}(T,\epsilon)\geq s$, and then we obtain that $$\overline{h}_{\mu}^{FK}(T,\epsilon)\geq\mathcal{P}_{FK}(T,K,d,\epsilon),$$ by letting $s\to\mathcal{P}_{FK}(T,K,d,\epsilon)$. Hence
		\begin{eqnarray*}
			\overline{mdim}_{FK}^P(T,K,d)&=&\limsup_{\epsilon\to0}\frac{\mathcal{P}_{FK}(T,K,d,\epsilon)}{\log \frac{1}{\epsilon}}\\
			&\le& \limsup_{\epsilon\to0}\frac{1}{\log\frac{1}{\epsilon}}\sup\{\overline{h}_{\mu}^{FK}(T,\epsilon):\mu(K)=1,\mu\in M(X)\}.
		\end{eqnarray*}
		
		Next we will prove the reverse inequility.
		Let $\epsilon>0$, $\mu\in M(X)$ with $\mu(K)=1$, we assume that $\overline{h}_{\mu}^{FK}(T,\epsilon)>0$. Let $0<s<\overline{h}_{\mu}^{FK}(T,\epsilon)$. We can choose a $\theta>0$, and a Borel set $A\subset K$ with $\mu(A)>0$ such that for all $x \in A$,
		$$\overline{h}_{\mu}^{FK}(T,x,\epsilon)=\limsup_{n\to\infty}-\frac{\log\mu(B_{FK_n}(x,\epsilon))}{n}>s+\theta.$$

		\noindent{\bf Claim:} For any $E\subset A$ with $\mu(E)>0$, we have $P_{FK}(T,d,E,s,\frac{\epsilon}{5})=\infty$.
		
		We first assume the claim holds. 
		Let $A_i \subset X, i=1,2,\dots,$ with $A \subset \bigcup_{i=1}^{\infty}A_i$. Then there exist some $A_i$ such that 
		$\mu(\overline{A}_i \cap A )>0$. Note that $\overline{A}_i \cap A \subset A$ is a Borel set, then by the claim and Lemma \ref{lem-2}, we have
		\begin{eqnarray*}
			\sum_{i}P_{FK}(T,A_i,d,s,\epsilon/10) 
			\ge  P_{FK}(T,\overline{A}_i,d,s,\epsilon/5) 
			\ge P_{FK}(T,\overline{A}_i\cap A,d,s,\epsilon/5)=\infty.
		\end{eqnarray*}
		
		Hence $$ \mathcal{P}_{FK}(T,A,d,s,\epsilon/10)
		=\inf \{\sum_{i=1}^{\infty} P_{FK}(T,A_i,d,s,\epsilon/10) :\bigcup_{i=1}^{\infty}A_{i}\supseteq A\}=\infty,$$
		and 
		$$\mathcal{P}_{FK}(T,K,d,s,\epsilon/10)\ge \mathcal{P}_{FK}(T,A,d,s,\epsilon/10)=\infty,$$
		which implies $\mathcal{P}_{FK}(T,d,K,\frac{\epsilon}{10})\ge s$.		
		Letting $s\to\overline{h}_{\mu}^{FK}(T,\epsilon)$,  we know $$\overline{h}_{\mu}^{FK}(T,\epsilon)\leq \mathcal{P}_{FK}(T,d,K,\frac{\epsilon}{10}).$$ Then$$\sup\{\overline{h}_{\mu}^{FK}(T,\epsilon):\mu\in M(X),\mu(K)=1\} \leq\mathcal{P}_{FK}(T,d,K,\frac{\epsilon}{10}),$$
		hence
		$$\limsup_{\epsilon\to0}\frac{1}{\log\frac{1}{\epsilon}}\sup\{\overline{h}_{\mu}^{FK}(T,\epsilon):\mu(K)=1,\mu\in M(X)\}\leq\overline{mdim}_{FK}^P(T,K,d).$$
		
		Next we prove the claim. Let $E\subset A$ with $\mu(E)>0$. For $n \in \N$, we define
		$$E_n=\{x\in E:\mu(B_{FK_n}(x,\epsilon))<e^{-(s+\theta)n}\}.$$	
		Since $E \subset A$,  we have $E=\cup_{n\geq N}E_n$ for any $N\in\mathbb{N}$. Fix  $N\in \N$, since $\mu(E)=\mu(\cup_{n\geq N}E_n)$, there is  $n\geq N$ such that 
		$$\mu(E_n)\geq\frac{1}{n(n+1)}\mu(E).$$
		
		Fix such $n$, consider a family of open covers $\{B_{FK_n}(x,\frac{\epsilon}{5})\}$ of $E_n$. By Lemma \ref{lem-5r}(using the Bowen metric $d_{FK_n}$ instead of $d$), there exists a finite pairwise disjoint subfamily $\{B_{FK_n}(x_i,\frac{\epsilon}{5})\}_{i\in I}$ , where $I$ is a finite index set, such that\[\cup_{i\in I}B_{FK_n}(x_i,\epsilon)\supset\cup_{x\in E_n}B_{FK_n}(x,\frac{\epsilon}{5})\supset E_n.\]
		
		Hence,
		\begin{equation*}
		\begin{split}
		P_{FK}(T,d,E,N,s,\frac{\epsilon}{5})
		&\geq P_{FK}(T,d,E_n,N,s,\frac{\epsilon}{5})\\
		&\geq\sum_{i\in I}e^{-sn}=e^{n\theta}\sum_{i\in I}e^{-n(s+\theta)}\\
		&\geq e^{n\theta}\sum_{i\in I}\mu(B_{FK_n}(x_i,\epsilon))\geq e^{n\theta}\mu(E_n)\\
		&\geq e^{n\theta}\frac{\mu(E)}{n(n+1)}.
		\end{split}
		\end{equation*}
		
		Letting $N\to\infty$, we obtain that $P_{FK}(T,d,E,s,\frac{\epsilon}{5})=\infty$. 
		
	\end{proof}

\end{document}